   \numberwithin{equation}{section}
\newtheorem{thm}{Theorem}[section]
\newtheorem{lem}[thm]{Lemma}
\newtheorem{prop}[thm]{Proposition}
\newtheorem{defn}[thm]{Definition}
\begin{document}
\begin{frontmatter}
\author{Tong Wu}
\ead{wut977@nenu.edu.cn}
\author{Yong Wang\corref{cor}}
\ead{wangy581@nenu.edu.cn}
\cortext[cor]{Corresponding author.}

\address{School of Mathematics and Statistics, Northeast Normal University,
Changchun, 130024, China}

\title{ Super warped products with a semi-symmetric non-metric connection }
\begin{abstract}
In this paper, we define a semi-symmetric non-metric
 connection on super Riemannian manifolds. And we compute the curvature tensor and the Ricci tensor of a semi-symmetric non-metric
 connection on super warped product spaces. Next, we introduce two kinds of super warped product spaces with a semi-symmetric non-metric
 connection and give the conditions that two super warped product spaces with a semi-symmetric non-metric
 connection are the Einstein super spaces with a semi-symmetric non-metric
 connection.
\end{abstract}
\begin{keyword} Semi-symmetric non-metric connection; the curvature tensor; Ricci tensor; super warped product spaces; the Einstein super spaces.\\

\end{keyword}
\end{frontmatter}
\section{Introduction}
 \indent The (singly) warped product $B\times_hF$ of two pseudo-Riemannian manifolds $(B,g_B)$ and
    $(F,g_F)$ with a smooth function $h:B\rightarrow (0,\infty)$ is the product
    manifold $B\times F$ with the metric tensor $g=g_B\oplus
    h^2g_F.$ Here, $(B,g_B)$ is called the base manifold,
    $(F,g_F)$ is called as the fiber manifold and $h$ is called as
    the warping function. Generalized Robertson-Walker space-times
    and standard static space-times are two well-known warped
    product spaces. The concept of warped products was first introduced by
    Bishop and ONeil (see \cite{BO}) to construct examples of Riemannian
    manifolds with negative curvature. In Riemannian geometry,
    warped product manifolds and their generic forms have been used
    to construct new examples with interesting curvature properties
    since then. In \cite{DD}, F. Dobarro and E. Dozo had studied from the viewpoint of partial differential equations and variational methods,
    the problem of showing when a Riemannian metric of constant scalar curvature can be produced on a product manifolds by a warped product
    construction.
    In \cite{EJK}, Ehrlich, Jung and Kim got explicit solutions to warping function to have a constant scalar curvature for generalized
    Robertson-Walker space-times.
    In \cite{ARS}, explicit solutions were also obtained for the warping
    function to make the space-time as Einstein when the fiber is
    also Einstein.\\
      \indent  N. S. Agashe and M. R. Chafle introduced the notion of a semi-symmetric non-metric connection and studied some of its properties and submanifolds of a
Riemannian manifold with a semi-symmetric non-metric connection \cite{AC1, AC2}. In \cite{SO}, Sular and \"{O}zgur studied warped product manifolds with a
         semi-symmetric non-metric connection, they computed curvature of semi-symmetric non-metric connection
          and considered Einstein warped product manifolds with a semi-symmetric non-metric connection. In \cite{W1}, Wang studied the Einstein multiply warped products with a semi-symmetric metric connection and the multiply warped products with a semi-symmetric metric connection with constant scalar curvature.\\
  \indent On the other hand, in \cite{BG}, the definition of super warped product spaces was given. In \cite{GDMVR}, several new super warped product spaces were given and the authors also studied the Einstein
  equations with cosmological constant in these new super warped product spaces. In \cite{WY}, Wang studied super warped product spaces with a semi-symmetric metric connection. Our motivation is to study super warped product spaces with a semi-symmetric non-metric connection.\\
\indent In Section \ref{Section:2}, we state some definitions of super manifolds and super Riemannian metrics. We also define a semi-symmetric non-metric
 connection on super Riemannian manifolds and prove that there is a unique semi-symmetric non-metric
 connection on super Riemannian manifolds which is non-metric and has the semi-symmetric torsion. In Section \ref{Section:3}, we compute the curvature tensor and the Ricci tensor of a semi-symmetric non-metric
 connection on super warped product spaces. In Section \ref{Section:4}, we introduce two kinds of super warped product spaces with a semi-symmetric non-metric
 connection and give the conditions that two super warped product spaces with a semi-symmetric non-metric
 connection are the Einstein super spaces with a semi-symmetric non-metric
 connection.
%%%%%%%%%%µÚ¶þ²¿·Ö%%%
\section{A semi-symmetric non-metric
 connection on super Riemannian manifolds}
\label{Section:2}
In this section, we give some definitions about Riemannian supergeometry.
\begin{defn}\label{def1} A locally $\mathbb{Z}_2$-ringed space is a pair $S:= (|S|, \mathcal{O}_S)$ where $|S|$ is a second-countable
Hausdorff space, and a $\mathcal{O}_S$ is a sheaf of $\mathbb{Z}_2$-graded $\mathbb{Z}_2$-commutative associative unital $\mathbb{R}$-algebras, such that the
stalks $\mathcal{O}_{S,p}$, $p\in |S|$ are local rings.
\end{defn}
 \indent In this context, $\mathbb{Z}_2$-commutative means that any two sections $s,t\in \mathcal{O}_S(|U|),~~|U|\subset|S|$ open,
 of homogeneous
degree $|s|\in \mathbb{Z}_2$
and $|t|\in \mathbb{Z}_2$
commute up to the sign rule
$st=(-1)^{|s||t|}ts$.
 $\mathbb{Z}_2$-ring
space $U^{m|n}:= (U,C^{\infty}_{U^m}\otimes \wedge \mathbb{R}^n)$, is called standard
superdomain where $C^{\infty}_{U^m}$ is the sheaf of smooth functions on $U$ and $\wedge\mathbb{R}^n$ is
the exterior algebra of $\mathbb{R}^n$. We can employ (natural) coordinates $x^I:=(x^a,\xi^A)$ on any $\mathbb{Z}_2$-domain, where $x^a$ form a coordinate system on $U$ and the $\xi^A$
are formal coordinates.
\begin{defn}\label{def2}
 A supermanifold of dimension $m|n$ is a super ringed space
$M=(|M|, \mathcal{O}_M )$ that is locally isomorphic to $\mathbb{R}^{m|n}$ and $|M|$ is a second countable
and Hausdorff topological space.
\end{defn}
The tangent sheaf $\mathcal{T}M$ of a $\mathbb{Z}_2$-manifold $M$ is defined as the sheaf of derivations of sections of the structure
sheaf, i.e., $\mathcal{T}M(|U|) := {\rm Der}(\mathcal{O}_M(|U|)),$ for arbitrary open set $|U|\subset |M|.$ Naturally, this is a sheaf of locally free $\mathcal{O}_M$-modules. Global sections of the tangent sheaf are referred to as {\it vector fields}. We denote the $\mathcal{O}_M (|M|)$-module
of vector fields as ${\rm Vect}(M)$. The dual of the tangent sheaf is the {\it cotangent sheaf}, which we denote as $\mathcal{T}^*M$.
This is also a sheaf of locally free $\mathcal{O}_M$-modules. Global section of the cotangent sheaf we will refer to as {\it one-forms}
and we denote the $\mathcal{O}_M(|M|)$-module of one-forms as $\Omega^1(M)$.
\begin{defn}\label{def3}
 A Riemannian metric on a $\mathbb{Z}_2$-manifold M is a $\mathbb{Z}_2$-homogeneous, $\mathbb{Z}_2$-symmetric, non-degenerate,
$\mathcal{O}_M$-linear morphisms of sheaves $\left<-,-\right>_g:~~\mathcal{T}M\otimes \mathcal{T}M\rightarrow \mathcal{O}_M.$
A $\mathbb{Z}_2$-manifold equipped with a Riemannian metric is referred to as a Riemannian $\mathbb{Z}_2$-manifold.
\end{defn}
We will insist that the Riemannian metric is homogeneous with respect to the $\mathbb{Z}_2$-degree, and we will denote
the degree of the metric as $|g| \in \mathbb{Z}_2$.
Explicitly, a Riemannian metric has the following properties:\\
(1)$ |\left<X,Y\right>_g |= |X| + |Y |+ |g|,$\\
(2)$\left<X,Y\right>_g =(-1)^{|X||Y|}\left<Y,X\right>_g,$\\
(3) If $\left<X,Y\right>_g = 0$ for all $Y \in Vect(M),$ then $X = 0,$\\
(4) $\left<fX+Y,Z\right>_g =f\left<X,Z\right>_g +\left<Y,Z\right>_g ,$\\
for arbitrary (homogeneous) $ X, Y, Z \in {\rm Vect}(M)$ and $f \in C^{\infty}(M)$. We will say that a Riemannian metric is
even if and only if it has degree zero. Similarly, we will say that a Riemannian metric is odd if and only
if it has degree one. Any Riemannian metric we consider will be either even or odd as we will only be
considering homogeneous metrics.\\
\indent Now we recall the definition of the warped product of Riemannian $\mathbb{Z}_2$-manifolds. For details, see the section 2.3 in \cite{BG}. Let $M_1\times M_2$
be the product of two $\mathbb{Z}_2$-manifolds $M_1$ and $M_2$. Let $(M_i,g_i)( i=1,2)$ be Riemannian $\mathbb{Z}_2$-manifolds whose Riemannian metric are of the same $\mathbb{Z}_2$-degree. Let $\mu\in C^{\infty}
(M_1)$ be a degree $0$ invertible global functions that is strictly positive, i.e. $\varepsilon_{M_1}(\mu)$ a strictly positive function on $|M_1|$ where $\varepsilon$ is simply "throwing away" the formal coordinates. Then the warped product is defined as
$$M_1\times_\mu M_2:=(M_1\times M_2,g:=\pi_1^*g_1+(\pi^*_1\mu)\pi^*_2g_2),$$
where $\pi_i: M_1\times M_2\rightarrow M_i~~(i=1,2)$ is the projection. By Proposition 4 in \cite{BG}, the warped product $M_1\times_\mu M_2$ is a Riemannian $\mathbb{Z}_2$-manifold.\\
\begin{defn}\label{fff4}(Definition 9 in \cite{BG}) An affine connection on a $\mathbb{Z}_2$-manifold is a $\mathbb{Z}_2$-degree preserving map\\
$$\nabla:~~ {\rm Vect}(M)\times {\rm Vect}(M)\rightarrow {\rm Vect}(M);~~(X,Y)\mapsto \nabla_XY,$$
which satisfies the following\\
1) Bi-linearity $$\nabla_X(Y+Z)=\nabla_XY+\nabla_XZ;~~\nabla_{X+Y}Z=\nabla_XZ+\nabla_YZ,$$
2)$C^{\infty}(M)$-linearrity in the first argument
$$\nabla_{fX}Y=f\nabla_XY,$$
3)The Leibniz rule
$$\nabla_X(fY)=X(f)Y+(-1)^{|X||f|}f\nabla_XY,$$
for all homogeneous $X,Y,Z\in {\rm Vect}(M)$ and $f\in C^{\infty}(M)$.
\end{defn}
\begin{defn}\label{def4}(Definition 10 in \cite{BG})
 The torsion tensor of an affine connection \\
 $T_\nabla:~~{\rm Vect}(M)\otimes_{C^{\infty}(M)} {\rm Vect}(M)\rightarrow {\rm Vect}(M)$
 is
defined as
$$T_\nabla(X,Y):=\nabla_XY-(-1)^{|X||Y|}\nabla_YX-[X,Y],$$
for any (homogeneous) $X, Y \in {\rm Vect}(M)$. An affine connection is said to be symmetric if the torsion vanishes.
\end{defn}
\begin{defn}\label{def5}(Definition 11 in \cite{BG})
An affine connection on a Riemannian $\mathbb{Z}_2$-manifold $(M, g)$ is said to be metric compatible if
and only if
$$ X\left<Y,Z\right>_g=\left<\nabla_XY,Z\right>_g+(-1)^{|X||Y|}\left<Y,\nabla_XZ\right>_g,$$
for any $X, Y, Z\in  {\rm Vect}(M)$.
\end{defn}
\begin{thm}\label{thm1}(Theorem 1 in \cite{BG})There is a unique symmetric (torsionless) and metric compatible
affine connection $\nabla^L$ on a Riemannian $\mathbb{Z}_2$-manifold $(M, g)$ which satisfies the Koszul formula
\begin{align}\label{a1}
2\left<\nabla^L_XY,Z\right>_g&=X\left<Y,Z\right>_g+\left<[X,Y],Z\right>_g\nonumber\\
&+(-1)^{|X|(|Y|+|Z|)}(Y\left<Z,X\right>_g-\left<[Y,Z],X\right>_g)\nonumber\\
&-(-1)^{|Z|(|X|+|Y|)}(Z\left<X,Y\right>_g-\left<[Z,X],Y\right>_g),\nonumber\\
\end{align}
for all homogeneous $X,Y,Z\in {\rm Vect}(M)$.
\end{thm}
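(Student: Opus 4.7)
The plan is to follow the classical Levi--Civita argument, but keeping careful track of the Koszul signs dictated by the $\mathbb{Z}_2$-grading. I would split the proof into a uniqueness part and an existence part.

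For \emph{uniqueness}, I would assume $\nabla^L$ exists with both required properties and derive formula \eqref{a1} directly. The idea is to write the metric-compatibility identity three times, once in each cyclic permutation of $(X,Y,Z)$, and then form the alternating combination $X\langle Y,Z\rangle_g + (-1)^{|X|(|Y|+|Z|)}Y\langle Z,X\rangle_g - (-1)^{|Z|(|X|+|Y|)}Z\langle X,Y\rangle_g$. Each term expands into a sum of two $\nabla^L$ inner products with a graded sign in front. Using $T_{\nabla^L}=0$ to replace $\nabla^L_XY-(-1)^{|X||Y|}\nabla^L_YX$ by $[X,Y]$, and similarly for the other cyclic pairs, the six $\nabla^L$-terms collapse so that only $2\langle \nabla^L_XY,Z\rangle_g$ survives on the right-hand side. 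This yields \eqref{a1}, and by non-degeneracy of $g$ it determines $\nabla^L_XY$ uniquely.

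For \emph{existence}, I would turn the argument around: \emph{define} $\nabla^L_XY$ by declaring, for each fixed $X,Y$, the map $Z\mapsto \langle\nabla^L_XY,Z\rangle_g$ to be the right-hand side of \eqref{a1} divided by $2$. Non-degeneracy (condition (3) of Definition \ref{def3}) together with the fact that the right-hand side is $\mathcal{O}_M$-linear in $Z$ guarantees that a unique vector field $\nabla^L_XY$ is determined. Then I would verify in turn: (i) bi-additivity in $X$ and $Y$, which is immediate from the formula; (ii) $C^\infty(M)$-linearity in $X$, where substituting $fX$ produces $f$ times each term plus boundary contributions $Y(f)\langle Z,X\rangle_g$ and $Z(f)\langle X,Y\rangle_g$ with opposite signs that cancel after using the graded symmetry of $g$; (iii) the graded Leibniz rule in $Y$, where substituting $fY$ produces the desired $X(f)\langle Y,Z\rangle_g$ term while the stray terms again cancel thanks to the graded symmetry; (iv) vanishing of torsion, obtained by expanding $2\langle \nabla^L_XY-(-1)^{|X||Y|}\nabla^L_YX,Z\rangle_g$ using \eqref{a1} twice and observing the cancellation leaves exactly $2\langle[X,Y],Z\rangle_g$; (v) metric compatibility, obtained by expanding $2\langle\nabla^L_XY,Z\rangle_g + 2(-1)^{|X||Y|}\langle Y,\nabla^L_XZ\rangle_g$ and checking that only $2X\langle Y,Z\rangle_g$ remains.

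The main obstacle will not be the structure of the argument, which mirrors the classical one, but the systematic bookkeeping of the sign factors in steps (ii), (iii) and (v). Every cancellation depends on pushing a $(-1)^{|X||Y|}$ or a $(-1)^{|Z|(|X|+|Y|)}$ past a bracket or a metric, using $\mathbb{Z}_2$-commutativity $\langle X,Y\rangle_g = (-1)^{|X||Y|}\langle Y,X\rangle_g$ and the degree identity $|\langle X,Y\rangle_g|=|X|+|Y|+|g|$. I would organize the calculations by fixing the parities of $X,Y,Z,f$ as arbitrary but homogeneous, compute the exponents modulo $2$ explicitly, and only then verify that the remainders cancel, so that the verification is reduced to a finite check that the $\mathbb{Z}_2$-exponents match on both sides.
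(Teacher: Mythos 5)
The paper does not actually prove this statement: Theorem \ref{thm1} is imported verbatim from Theorem 1 of \cite{BG} and used as a black box. Your plan is nevertheless the correct and standard one, and it is essentially the argument given in \cite{BG}: derive the Koszul formula from the cyclic combination of the metric-compatibility identity plus torsion-freeness to get uniqueness via non-degeneracy, then define $\nabla^L$ by the formula and verify $\mathcal{O}_M$-linearity in $Z$, the connection axioms, symmetry, and compatibility, with the only real work being the $\mathbb{Z}_2$-sign bookkeeping you already flag.
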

\begin{defn}\label{def6}(Definition 13 in \cite{BG})
The Riemannian curvature tensor of an affine connection
$$R_\nabla:~~{\rm Vect}(M)\otimes_{C^{\infty}(M)} {\rm Vect}(M)\otimes_{C^{\infty}(M)} {\rm Vect}(M)\rightarrow {\rm Vect}(M)$$
is defined as
\begin{equation}\label{a2}
R_\nabla(X, Y )Z =\nabla_X\nabla_Y-(-1)^{|X||Y|}\nabla_Y\nabla_X-\nabla_{[X,Y]}Z,\nonumber\\
\end{equation}
for all $X, Y$ and $Z \in {\rm Vect}(M)$.
\end{defn}
Directly from the definition it is clear that
\begin{equation}\label{a3}
R_\nabla(X, Y )Z =-(-1)^{|X||Y|}R_\nabla(Y,X)Z,
\end{equation}
for all $X, Y$ and $Z \in {\rm Vect}(M)$.
\begin{defn}\label{def7}(Definition 14 in \cite{BG})
 The Ricci curvature tensor of an affine connection is the symmetric rank-$2$ covariant tensor
defined as
 \begin{equation}\label{a4}
Ric_\nabla(X, Y ):=(-1)^{|\partial_{x^I}|(|\partial_{x^I}|+|X|+|Y|)}\frac{1}{2}\left[R_\nabla(\partial_{x^I},X)Y+(-1)^{|X||Y|}R_\nabla(\partial_{x^I},Y)X\right]^I,
\end{equation}
where $X,Y\in {\rm Vect}(M)$ and $[~~]^I$ denotes the coefficient of $\partial_{x^I}$ and $\partial_{x^I}$ is the natural frame of $\mathcal{T}M$.
\end{defn}
\begin{defn}\label{def8}(Definition 16 in \cite{BG})
Let $f \in C^{\infty}(M)$ be an arbitrary function on a Riemannian $\mathbb{Z}_2$-manifold $(M, g)$. The gradient
of $f$ is the unique vector field ${\rm grad}_gf$ such that
 \begin{equation}\label{a5}
X(f)=(-1)^{|f||g|}\left<X,{\rm grad}_gf\right>_g,
\end{equation}
for all $X \in {\rm Vect}(M)$.
\end{defn}
\begin{defn}\label{def9}(Definition 17 in \cite{BG})
Let $(M, g)$ be a Riemannian $\mathbb{Z}_2$-manifold
and let $\nabla^L$ be the associated Levi-Civita connection.
The covariant divergence is the map ${\rm Div}_L:{\rm Vect}(M)\rightarrow C^{\infty}(M)$, given by
\begin{equation}\label{a6}
{\rm Div}_L(X)=(-1)^{|\partial_{x^I}|(|\partial_{x^I}|+|X|)}(\nabla_{\partial_{x^I}}X)^I,
\end{equation}
for any arbitrary $X \in {\rm Vect}(M)$.
\end{defn}
\begin{defn}\label{def10}(Definition 18 in \cite{BG}) Let $(M, g)$ be a Riemannian $\mathbb{Z}_2$-manifold
and let $\nabla^L$ be the associated Levi-Civita connection.
The connection Laplacian (acting on functions) is the differential operator of  $\mathbb{Z}_2$-degree $|g|$ defined as
\begin{equation}\label{a7}
\triangle_g(f)={\rm Div}_L({\rm grad}_gf),
\end{equation}
for any and all $f \in C^{\infty}(M).$
\end{defn}
\begin{defn}\label{def11}Let $(M, g)$ be a Riemannian $\mathbb{Z}_2$-manifold and $P\in {\rm Vect}(M)$ which satisfied $|g|+|P|=0$
and we define a semi-symmetric non-metric connection $\widehat{\nabla}$ on $(M, g)$
\begin{equation}\label{a8}
{\widehat{\nabla}}_XY=\nabla^L_XY+X\cdot g(Y,P)=\nabla^L_XY+(-1)^{|X||Y|}g(Y,P)X,
\end{equation}
for any homogenous $X,Y\in {\rm Vect}(M)$ and where
$X\cdot f=(-1)^{|X||f|}fX$ for $f\in C^{\infty}(M)$.
\end{defn}
Obviously, we have $\widehat{\nabla}_{X+Y}Z=\widehat{\nabla}_{X}Z+\widehat{\nabla}_{Y}Z;~~\widehat{\nabla}_{X}(Y+Z)=\widehat{\nabla}_XY+\widehat{\nabla}_XZ,$ for any homogenous $X, Y, Z\in {\rm Vect}(M)$.
We can verify that $\widehat{\nabla}_XY$ satisfies the Definition \ref{fff4}, then $\widehat{\nabla}_XY$ is an affine connection. By Definition \ref{def4}, we get
\begin{equation}\label{a9}
T_{\widehat{\nabla}}(X,Y)=X\cdot g(Y,P)-(-1)^{|X||Y|}Y\cdot g(X,P).\\
\end{equation}
Then, we call that $\widehat{\nabla}_XY$ is a semi-symmetric connection. By Definition \ref{def5} and Definition \ref{def11}, we get
\begin{align}\label{a10}
&\left<\widehat{\nabla}_XY,Z\right>_g+(-1)^{|X||Y|}\left<Y,\widehat{\nabla}_XZ\right>_g\nonumber\\
&=X\left<Y,Z\right>_g+\left<X\cdot g(Y,P),Z\right>_g+(-1)^{|X||Y|}\left<Y,X\cdot g(Z,P)\right>_g\nonumber\\
&=X\left<Y,Z\right>_g+(-1)^{|Y||X|}g(Y,P)g(X,Z)+(-1)^{|X||Y|}(-1)^{|Z|(|X|+|Y|)}g(Z,P)g(Y,X).\nonumber\\
\end{align}
So $\widehat{\nabla}$ doesn't preserve the metric.

\begin{thm}\label{thm2}There is a unique non-metric compatible
affine connection $\widehat{\nabla}$ on a Riemannian $\mathbb{Z}_2$-manifold $(M, g)$ which satisfies (\ref{a9}) and (\ref{a10}).
\end{thm}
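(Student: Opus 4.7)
My plan is to establish existence and uniqueness separately, following the template of the Levi-Civita case (Theorem \ref{thm1}). Existence is essentially already in hand: the computation displayed in equation (\ref{a10}), carried out immediately after Definition \ref{def11}, verifies that the connection $\widehat{\nabla}$ of that definition satisfies the non-metric-compatibility relation; the derivation of (\ref{a9}) from the definition together with the remark that $\widehat{\nabla}$ meets the axioms of Definition \ref{fff4} complete the check that $\widehat{\nabla}$ itself witnesses existence.

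For uniqueness I would mimic the Koszul derivation used in Theorem \ref{thm1}. Suppose $\widetilde{\nabla}$ is any affine connection on $(M,g)$ satisfying (\ref{a9}) and (\ref{a10}). Write (\ref{a10}) three times with the cyclic permutations $(X,Y,Z)$, $(Y,Z,X)$, $(Z,X,Y)$ of the arguments, and combine these three copies using the $\mathbb{Z}_2$-graded weights $+1$, $+(-1)^{|X|(|Y|+|Z|)}$, $-(-1)^{|Z|(|X|+|Y|)}$ appearing in the Koszul formula (\ref{a1}). After addition, the mixed terms of the form $\langle \widetilde{\nabla}_Y Z, X\rangle_g - (-1)^{|Y||Z|}\langle \widetilde{\nabla}_Z Y, X\rangle_g$ can be rewritten via (\ref{a9}) as $\langle [Y,Z] + Y\cdot g(Z,P) - (-1)^{|Y||Z|} Z\cdot g(Y,P),\, X\rangle_g$, and analogously for the $(X,Z)$ pair. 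The standard Koszul cancellations then collapse the left-hand side to $2\langle \widetilde{\nabla}_X Y, Z\rangle_g$, while the right-hand side is an explicit expression in $X,Y,Z$, brackets, the metric, and $P$. Non-degeneracy of $g$ then forces $\widetilde{\nabla}_X Y$ to be uniquely determined, and a direct comparison with (\ref{a8}) confirms $\widetilde{\nabla} = \widehat{\nabla}$.

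The main obstacle is the bookkeeping of the $\mathbb{Z}_2$-graded sign factors: those in (\ref{a9}), (\ref{a10}) and (\ref{a1}) depend jointly on $|X|,|Y|,|Z|$ together with the constraint $|P|=|g|$ imposed in Definition \ref{def11}, and they must line up exactly across the three cyclic copies for the familiar Koszul cancellations to yield a single $2\langle \widetilde{\nabla}_X Y, Z\rangle_g$ on the left. In particular, the degree of the scalar $g(Y,P)$ is $|Y|$ rather than $|Y|+|g|$, so whenever it is commuted past a homogeneous vector field an extra sign appears; these must be matched against the signs already present in (\ref{a10}). Once this tracking is done carefully, no deeper ingredient is required.
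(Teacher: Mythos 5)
Your argument is correct and rests on the same two inputs as the paper's proof: the modified compatibility identity (\ref{a10}) taken over the three cyclic permutations of $(X,Y,Z)$ with the Koszul weights, and the prescribed torsion (\ref{a9}) used to convert antisymmetrized connection terms into Lie brackets plus explicit $P$-terms. The difference is organizational. You run the full Koszul derivation for $\widetilde{\nabla}$ itself, whereas the paper first writes $\nabla^{*}_XY=\nabla^L_XY+B(X,Y)$ and only then forms the cyclic combination. That decomposition buys a genuine simplification: since $\nabla^L$ already absorbs all the derivative terms $X\left<Y,Z\right>_g$ and all the bracket terms (this is exactly Theorem \ref{thm1}), the relation left over for $B$, namely (\ref{a12}), is purely algebraic, and the cyclic combination collapses to the pointwise identity (\ref{a14}), from which $B(X,Y)=X\cdot g(Y,P)$ follows by non-degeneracy of $g$. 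Your route re-derives the Levi-Civita cancellations with all the $\mathbb{Z}_2$-graded signs, which is precisely the bookkeeping you identify as the main obstacle; it works, and it yields an explicit Koszul-type formula for $\widehat{\nabla}$ as a by-product, but adopting the difference-tensor form lets you quote Theorem \ref{thm1} and avoid re-tracking those signs. In both versions the final step is the same appeal to non-degeneracy to conclude $\widetilde{\nabla}=\widehat{\nabla}$.
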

\begin{proof}By (\ref{a9}), we know that a semi-symmetric non-metric connection $\widehat{\nabla}$ satisfies the conditions in Theorem \ref{thm2}, then we only need to prove
the uniqueness. Let $\nabla^*$ be the other connection which satisfies (\ref{a9}) and (\ref{a10}). And let $\nabla^*_XY=\nabla^L_XY+B(X,Y),$
then\\
\begin{equation}\label{a11}
B(fX,Y)=fB(X,Y),~~B(X,fY)=(-1)^{|f||X|}B(X,Y).
\end{equation}
By $\nabla^L$ preserving the metric and (\ref{a10}), we get
\begin{align}\label{a11*}
&g(\nabla^*_XY,Z)+(-1)^{|X||Y|}g(Y,\nabla^*_XZ)\nonumber\\
&=g(\nabla^L_XY,Z)+g(B(X,Y),Z)+(-1)^{|X||Y|}g(Y,\nabla^L_XZ)+(-1)^{|X||Y|}g(Y,B(X,Z))\nonumber\\
&=X\left<Y,Z\right>_g+(-1)^{|Y||X|}g(Y,P)g(X,Z)+(-1)^{|X||Y|}(-1)^{|Z|(|X|+|Y|)}g(Z,P)g(Y,X).\nonumber\\
\end{align}
So
\begin{align}\label{a12}
&g(B(X,Y),Z)+(-1)^{|X||Y|}g(Y,B(X,Z))\nonumber\\
&=(-1)^{|Y||X|}g(Y,P)g(X,Z)+(-1)^{|X||Y|}(-1)^{|Z|(|X|+|Y|)}g(Z,P)g(Y,X).\nonumber\\
\end{align}
By $\nabla^L$ having no torsion, we have
\begin{align}\label{a13}
T_{\nabla^*}(X,Y)&=\nabla^*_XY-(-1)^{|X||Y|}\nabla^*_YX-[X,Y]\nonumber\\
&=\nabla^L_XY+B(X,Y)-(-1)^{|X||Y|}\nabla^L_YX-(-1)^{|X||Y|}B(Y,X)-[X,Y]\nonumber\\
&=B(X,Y)-(-1)^{|X||Y|}B(Y,X).
\end{align}
By (\ref{a12}) and (\ref{a13}) and $|B|=0$, we have
\begin{align}\label{a14}
&g(T_{\nabla^*}(X,Y),Z)+(-1)^{|Z|(|X|+|Y|)}g(T_{\nabla^*}(Z,X),Y)+(-1)^{|X||Y|}(-1)^{|Z|(|X|+|Y|)}g(T_{\nabla^*}(Z,Y),X)\nonumber\\
&=2g(B(X,Y),Z)-2(-1)^{|X||Y|}(-1)^{|Z|(|X|+|Y|)}g(Z,P)g(Y,X).\nonumber\\
\end{align}
By (\ref{a9}) and (\ref{a14}), we get
\begin{equation}\label{a15}
2g(B(X,Y),Z)=2g(X\cdot g(Y,P),Z),\nonumber\\
\end{equation}
then $B(X,Y)=X\cdot g(Y,P)$. So $\nabla^*=\widehat{\nabla}$, we get the proof of uniqueness.
\end{proof}
\begin{prop}\label{prop1}The following equality holds
\begin{align}\label{a16}
R_{\widehat{\nabla}}(X,Y)Z&=R^L(X,Y)Z+(-1)^{(|X|+|Y|)|Z|}[g(Z,\nabla^L_XP)Y
-(-1)^{|X||Y|}g(Z,\nabla^L_YP)X]\nonumber\\
&+(-1)^{(|X|+|Y|)|Z|}\pi(Z)[(-1)^{|X||Y|}\pi(Y)X
-\pi(X)Y],\nonumber\\
\end{align}
where $\pi$ be a one form defined by $\pi(Z):=g(Z,P)$ and $|\pi|=0$.\\
\end{prop}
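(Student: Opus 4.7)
\noindent\emph{Proof plan for Proposition \ref{prop1}.}

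The plan is to compute $R_{\widehat{\nabla}}(X,Y)Z$ directly from its definition in \eqref{a2} by substituting the formula \eqref{a8} for $\widehat{\nabla}$ three times (inside $\widehat{\nabla}_X\widehat{\nabla}_YZ$, inside $\widehat{\nabla}_Y\widehat{\nabla}_XZ$, and inside $\widehat{\nabla}_{[X,Y]}Z$), and then to simplify the resulting expression using the Leibniz rule, the metric compatibility of $\nabla^L$, and the torsion-freeness of $\nabla^L$. Throughout I will use that $|g|+|P|=0$, so that the function $g(Y,P)$ carries $\mathbb{Z}_2$-degree $|Y|$ and the one-form $\pi$ has $|\pi|=0$.

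First I would expand $\widehat{\nabla}_YZ=\nabla^L_YZ+(-1)^{|Y||Z|}g(Z,P)Y$ and then apply $\widehat{\nabla}_X$. The outer $\widehat{\nabla}_X$ acts on a sum of two vector fields, and on the second summand I apply the Leibniz rule from Definition \ref{fff4} (item 3), treating $g(Z,P)$ as a function of degree $|Z|$; this produces the term $X(g(Z,P))\,Y$ together with $(-1)^{|X||Z|}g(Z,P)\widehat{\nabla}_XY$. The key simplification then comes from the metric compatibility of $\nabla^L$ in Definition \ref{def5}, which lets me replace
\[
X(g(Z,P))=g(\nabla^L_XZ,P)+(-1)^{|X||Z|}g(Z,\nabla^L_XP).
\]
Performing the same expansion for $\widehat{\nabla}_Y\widehat{\nabla}_XZ$ and for $\widehat{\nabla}_{[X,Y]}Z$, and then forming the graded commutator with sign $(-1)^{|X||Y|}$ as in \eqref{a2}, the three pieces of type $\nabla^L_X\nabla^L_YZ-(-1)^{|X||Y|}\nabla^L_Y\nabla^L_XZ-\nabla^L_{[X,Y]}Z$ combine to give exactly $R^L(X,Y)Z$.

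Next I collect the remaining terms. The four quadratic-in-$P$ terms of the form $(-1)^{\cdots}g(Z,P)g(Y,P)X$ and its $X\leftrightarrow Y$ partner, together with the contributions coming from $\widehat{\nabla}_{[X,Y]}Z$ via $g(Z,P)[X,Y]$, must be reorganised using the torsion-free identity $[X,Y]=\nabla^L_XY-(-1)^{|X||Y|}\nabla^L_YX$; the $g(Z,P)\nabla^L_XY$ pieces from $\widehat{\nabla}_X\widehat{\nabla}_YZ$ and $\widehat{\nabla}_Y\widehat{\nabla}_XZ$ will cancel against this, leaving only the pure $\pi(Z)[\pi(Y)X-\pi(X)Y]$ combination with the correct overall sign $(-1)^{(|X|+|Y|)|Z|}$. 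The terms of the form $g(\nabla^L_YZ,P)X$ (and its partner) produced by the outer $\widehat{\nabla}$ will cancel against $g(\nabla^L_XZ,P)Y$-type terms that appear when $\widehat{\nabla}_{[X,Y]}Z$ is expanded via $[X,Y]g(Z,P)=X(Y(g(Z,P)))-(-1)^{|X||Y|}Y(X(g(Z,P)))$, so that only the genuinely new contribution $(-1)^{(|X|+|Y|)|Z|}[g(Z,\nabla^L_XP)Y-(-1)^{|X||Y|}g(Z,\nabla^L_YP)X]$ survives.

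The argument is essentially a routine expansion, so the only real obstacle is bookkeeping: keeping track of the $\mathbb{Z}_2$-signs, in particular the factor $(-1)^{|X||Z|}$ that appears when $\widehat{\nabla}_X$ crosses the function $g(Z,P)$, and the global factor $(-1)^{(|X|+|Y|)|Z|}$ which will emerge uniformly in front of every $P$-dependent term of the final formula. I would verify the final signs by checking the symmetry \eqref{a3}, i.e.\ that the right-hand side of \eqref{a16} is graded-antisymmetric in $(X,Y)$, which gives a clean consistency check before declaring the proof complete.
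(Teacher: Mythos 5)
Your plan is essentially the paper's own proof: expand $R_{\widehat{\nabla}}(X,Y)Z$ from Definitions \ref{def6} and \ref{def11}, use metric compatibility of $\nabla^L$ to write $X(\pi(Z))=\pi(\nabla^L_XZ)+(-1)^{|X||Z|}g(Z,\nabla^L_XP)$ inside the Leibniz expansion of $\nabla^L_X(\pi(Z)Y)$, and let the three $\nabla^L$-pieces reassemble into $R^L(X,Y)Z$ while the $\pi(Z)\nabla^L_XY$ pieces absorb $-(-1)^{(|X|+|Y|)|Z|}\pi(Z)[X,Y]$ via torsion-freeness; this is exactly equations (\ref{aaa})--(\ref{a17}) of the paper. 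One bookkeeping claim in your third paragraph is misattributed, however: $\widehat{\nabla}_{[X,Y]}Z=\nabla^L_{[X,Y]}Z+(-1)^{(|X|+|Y|)|Z|}\pi(Z)[X,Y]$ contains no $[X,Y](\pi(Z))$ term at all, since $Z$ is not multiplied by a function there. The term $(-1)^{|X|(|Y|+|Z|)}\pi(\nabla^L_YZ)X$, produced when the $P$-part of the outer $\widehat{\nabla}_X$ hits $\nabla^L_YZ$, actually cancels against the $\pi(\nabla^L_YZ)X$ part of $Y(\pi(Z))X$ arising in the Leibniz expansion of $-(-1)^{|X||Y|}(-1)^{|X||Z|}\nabla^L_Y(\pi(Z)X)$ (and symmetrically with $X\leftrightarrow Y$); the surviving piece of that derivative is precisely what yields $g(Z,\nabla^L_YP)X$. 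With that correction the plan goes through, and your proposed antisymmetry check against (\ref{a3}) is a sensible way to confirm the final signs.
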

\begin{proof}
By Definition \ref{def6} and Definition \ref{def11}, we have
\begin{align}\label{aaa}
R_{\widehat{\nabla}}(X,Y)Z&=\nabla^L_X\nabla^L_YZ+(-1)^{|Y||Z|}\nabla^L_X(\pi(Z)Y)-(-1)^{|X||Y|}[\nabla^L_Y\nabla^L_XZ+(-1)^{|X||Z|}\nabla^L_Y(\pi(Z)X)]\nonumber\\
&+(-1)^{|X|(|Y|+|Z|)}\pi(\nabla^L_YZ)X+(-1)^{|X|(|Y|+|Z|)}(-1)^{|Y||Z|}\pi(Z)\pi(Y)X\nonumber\\
&-(-1)^{|X||Y|}(-1)^{|Y|(|X|+|Z|)}[\pi(\nabla^L_XZ)Y+(-1)^{|X||Z|}\pi(Z)\pi(X)Y]-\nabla^L_{[X,Y]}Z\nonumber\\
&-(-1)^{(|X|+|Y|)|Z|}\pi(Z)[X,Y],\nonumber\\
\end{align}
by $\nabla^L$ preserving metric, we have
\begin{equation}\label{a17}
\nabla^L_X(\pi(Z)Y)=\pi(\nabla^L_XZ)Y+(-1)^{|X||Z|}g(Z,\nabla^L_XP)Y+(-1)^{|X||Z|}\pi(Z)\nabla^L_XY.
\end{equation}
Then, bring (\ref{a17}) into (\ref{aaa}), we can get Proposition \ref{prop1}.
\end{proof}

\section{Super warped products with a semi-symmetric non-metric connection}
\label{Section:3}
Let $(M=M_1\times_\mu M_2,g_\mu=\pi^*_1 g_1+\pi^*_1(\mu)\pi_2^*g_2)$ be the super warped product with $|g|=|g_1|=|g_2|$ and $|\mu|=0$. For simplicity, we assume that $\mu=h^2$ with $|h|=0$. Let $\nabla^{L,\mu}$ be the Levi-Civita connection on $(M,g_\mu)$ and $\nabla^{L,M_1}$ (resp. $\nabla^{L,M_2}$) be the Levi-Civita connection on $(M_1,g_1)$ (resp. $(M_2,g_2)$).
\begin{lem}\cite{WY}\label{lem1}
For $X,Y,Z\in{\rm Vect}(M_1)$ and $U,W,V\in {\rm Vect}(M_2)$, we have
\begin{align}\label{b1}
&(1)\nabla^{L,\mu}_XY=\nabla^{L,M_1}_XY,~~(2)\nabla^{L,\mu}_XU=\frac{X(h)}{h}U,\nonumber\\
&(3)\nabla^{L,\mu}_UX=(-1)^{|U||X|}\frac{X(h)}{h}U,~~(4)\nabla^{L,\mu}_UW=-hg_2(U,W){\rm grad}_{g_1}h+\nabla^{L,M_2}_UW.\nonumber\\
\end{align}
\end{lem}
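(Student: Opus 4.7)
The plan is to apply the super Koszul formula (\ref{a1}) to each of the four combinations and read off $\nabla^{L,\mu}$ by pairing against test vector fields of both types, using non-degeneracy of $g_\mu$. Three standing observations will be used throughout: (i) vector fields lifted from the two factors supercommute, so $[X,U]=[U,X]=0$ whenever $X\in{\rm Vect}(M_1)$ and $U\in{\rm Vect}(M_2)$; (ii) the warped metric decomposes as $g_\mu(X,Y)=g_1(X,Y)$, $g_\mu(U,W)=h^2 g_2(U,W)$ and $g_\mu(X,U)=0$; (iii) since $g_1(X,Y)$ is pulled back from $M_1$ and $g_2(U,W)$ from $M_2$, one has $U(g_1(X,Y))=0$ and $X(g_2(U,W))=0$, while $X(h^2 g_2(U,W))=2h\,X(h)\,g_2(U,W)$.

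For (1), testing $\nabla^{L,\mu}_X Y$ against $Z\in{\rm Vect}(M_1)$ in (\ref{a1}) makes every term match its analogue for $\nabla^{L,M_1}$ on $(M_1,g_1)$; tested against $W\in{\rm Vect}(M_2)$, all six Koszul terms vanish because $g_\mu(X,W)=g_\mu(Y,W)=0$ and $[X,Y]\in{\rm Vect}(M_1)$. Non-degeneracy then identifies $\nabla^{L,\mu}_X Y$ with $\nabla^{L,M_1}_X Y$. For (2), testing $\nabla^{L,\mu}_X U$ against $Y\in{\rm Vect}(M_1)$ kills every term (both metric-differentiations and both relevant brackets vanish); testing against $W\in{\rm Vect}(M_2)$ only the term $X\langle U,W\rangle_{g_\mu}=2hX(h)g_2(U,W)$ survives, producing $\langle\nabla^{L,\mu}_X U,W\rangle_{g_\mu}=\frac{X(h)}{h}\langle U,W\rangle_{g_\mu}$. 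Formula (3) then follows immediately from (2) via the torsion-free identity $\nabla^{L,\mu}_U X=(-1)^{|U||X|}\nabla^{L,\mu}_X U-[U,X]$ together with $[U,X]=0$.

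For (4), testing $\nabla^{L,\mu}_U W$ against $V\in{\rm Vect}(M_2)$: because $U,W,V$ annihilate $h$, the common factor $h^2$ pulls out of all six Koszul terms, reducing them to the Koszul formula for $\nabla^{L,M_2}$ on $(M_2,g_2)$ and yielding $h^2\langle\nabla^{L,M_2}_U W,V\rangle_{g_2}$. Tested against $X\in{\rm Vect}(M_1)$, all terms vanish except the one containing $X\langle U,W\rangle_{g_\mu}=2hX(h)g_2(U,W)$, which via Definition \ref{def8} can be rewritten as an inner product with ${\rm grad}_{g_1}h$, giving the warping contribution $-h g_2(U,W){\rm grad}_{g_1}h$; combining the two pairings and using non-degeneracy delivers (4).

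The main obstacle is the super-sign bookkeeping. In (4) one must check that the Koszul permutation factor $(-1)^{|X|(|U|+|W|)}$ combines with the supersymmetry sign of $g_1$ and with the sign produced by commuting the scalar $h g_2(U,W)$ (of degree $|U|+|W|+|g|$) past ${\rm grad}_{g_1}h$ (of degree $|g|$) to give the sign-free expression stated; this works because $|h|=0$ and $|g|=|g_1|=|g_2|$, but the verification is the one place where genuine care with the $\mathbb{Z}_2$-conventions is required.
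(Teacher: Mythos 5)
Your proposal is correct: the paper itself gives no proof of this lemma (it is quoted verbatim from \cite{WY}), and the super Koszul formula argument you give, pairing against test fields from each factor and using non-degeneracy, is the standard derivation that the cited source follows. Your sign bookkeeping in (4) checks out --- since $|{\rm grad}_{g_1}h|=|g|$ and $g_2(U,W)$ has degree $|U|+|W|+|g|$, the factors $(-1)^{|g||X|}$ and $(-1)^{(|U|+|W|+|g|)|X|}$ combine to reproduce exactly the Koszul sign $-(-1)^{|X|(|U|+|W|)}$ in front of $hX(h)g_2(U,W)$, so the stated formula follows.
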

Let $R^{L,\mu}$ denote the curvature tensor of the Levi-Civita connection on $(M,g_\mu)$ and let $R^{L,M_1}$ (resp. $R^{L,M_2}$) be the curvature tensor of the Levi-Civita connection on $(M_1,g_1)$ (resp. $(M_2,g_2)$). Let $H^h_{M_1}(X,Y):=XY(h)-\nabla^{L,{M_1}}_XY(h)$, then
$H^h_{M_1}(fX,Y)=fH^h_{M_1}(X,Y)$ and $H^h_{M_1}(X,fY)=(-1)^{|f||X|}fH^h_{M_1}(X,Y)$. $H^h_{M_1}$ is a $(0,2)$ tensor.
\begin{prop}\cite{WY}\label{prop2}
For $X,Y,Z\in{\rm Vect}(M_1)$ and $U,V,W\in {\rm Vect}(M_2)$, we have
\begin{align}\label{b3}
&(1)R^{L,\mu}(X,Y)Z=R^{L,M_1}(X,Y)Z,~~(2)R^{L,\mu}(V,X)Y=-(-1)^{|V|(|X|+|Y|)}\frac{H^h_{M_1}(X,Y)}{h}V,\nonumber\\
&(3)R^{L,\mu}(X,Y)V=0,~~(4)R^{L,\mu}(V,W)X=0,\nonumber\\
&(5)R^{L,\mu}(X,V)W=-(-1)^{|X|(|V|+|W|+|g|)}\frac{g_\mu(V,W)}{h}\nabla^{L,M_1}_X({\rm grad}_{g_1}h),\nonumber\\
&(6)R^{L,\mu}(V,W)U=R^{L,M_2}(V,W)U-(-1)^{|V|(|W|+|U|)}g_2(W,U)({\rm grad}_{g_1}h)(h)V\nonumber\\
&+(-1)^{|W||U|}g_2(V,U)({\rm grad}_{g_1}h)(h)W.\nonumber\\
\end{align}
\end{prop}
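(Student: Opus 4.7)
The plan is a direct six-fold case analysis based on the defining formula
\[
R^{L,\mu}(X,Y)Z=\nabla^{L,\mu}_X\nabla^{L,\mu}_Y Z-(-1)^{|X||Y|}\nabla^{L,\mu}_Y\nabla^{L,\mu}_X Z-\nabla^{L,\mu}_{[X,Y]}Z,
\]
applied to each pattern of horizontal ($M_1$) versus vertical ($M_2$) arguments, substituting the four identities of Lemma \ref{lem1} at every step. Before starting, I would note two structural facts that make the bookkeeping tractable: first, for $X\in{\rm Vect}(M_1)$ and $V\in{\rm Vect}(M_2)$ lifted to $M_1\times M_2$, one has $[X,V]=0$, because the two factors act on disjoint sets of (super)coordinates; second, any function pulled back from $M_1$ is annihilated by vertical fields (and conversely), so expressions like $V(X(h)/h)$ vanish. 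These two facts collapse many of the cross-terms that would otherwise appear.

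Case (1) is immediate from Lemma \ref{lem1}(1) since all three covariant derivatives and the bracket stay inside ${\rm Vect}(M_1)$. For case (2), I would expand $\nabla^{L,\mu}_V\nabla^{L,\mu}_X Y$ via Lemma \ref{lem1}(1),(3) and expand $\nabla^{L,\mu}_X\nabla^{L,\mu}_V Y$ via Lemma \ref{lem1}(3),(2) using the graded Leibniz rule from Definition \ref{fff4}(3), collecting the term $X(Y(h)/h)V$ and the connection-correction $-(\nabla^{L,M_1}_XY)(h)/h\,V$; the combination $XY(h)-(\nabla^{L,M_1}_XY)(h)$ is then recognised as $H^h_{M_1}(X,Y)$, and the $[V,X]=0$ remark kills the bracket term, yielding the stated sign $(-1)^{|V|(|X|+|Y|)}$. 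Cases (3) and (4) follow the same template, and I would show all terms cancel; the key simplification is that in the purely horizontal bracket $[X,Y]$ one has $[X,Y](h)/h = X(Y(h)/h)-(-1)^{|X||Y|}Y(X(h)/h)$ after using graded commutativity $X(h)Y(h)=(-1)^{|X||Y|}Y(h)X(h)$ to eliminate the $h^{-2}$ cross-terms, so the two Hessian-like contributions annihilate the bracket contribution exactly.

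Case (5) requires expanding $\nabla^{L,\mu}_X\nabla^{L,\mu}_V W$ via Lemma \ref{lem1}(4): the $\nabla^{L,M_2}_V W$ piece is vertical and differentiated by $X$ gives $(X(h)/h)\nabla^{L,M_2}_V W$, while the $-hg_2(V,W){\rm grad}_{g_1}h$ piece, being horizontal, is differentiated via Lemma \ref{lem1}(1) and produces $-g_\mu(V,W)\nabla^{L,M_1}_X({\rm grad}_{g_1}h)/h$ plus terms involving $X(h){\rm grad}_{g_1}h$. The term $\nabla^{L,\mu}_V\nabla^{L,\mu}_X W$ is computed from Lemma \ref{lem1}(2) and (3)–(4), and one checks that all $X(h){\rm grad}_{g_1}h$ and $V(\cdots)$ contributions either vanish (vertical derivative of horizontal function) or cancel against the corresponding term in $\nabla^{L,\mu}_{[X,V]}W=0$, leaving the Hessian-of-$h$ expression as claimed. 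Case (6) is the most involved: I would split $\nabla^{L,\mu}_V W$ and $\nabla^{L,\mu}_W U$ each via Lemma \ref{lem1}(4), apply the graded Leibniz rule to the product $hg_2(V,W)$, and use $\nabla^{L,\mu}_V({\rm grad}_{g_1}h)=\frac{({\rm grad}_{g_1}h)(h)}{h}V$ (which is Lemma \ref{lem1}(3) applied to $X={\rm grad}_{g_1}h$) to produce the $({\rm grad}_{g_1}h)(h)$-terms; the pure $M_2$ pieces reassemble into $R^{L,M_2}(V,W)U$ via the definition of curvature on $(M_2,g_2)$.

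The main obstacle is not conceptual but bookkeeping: every application of the graded Leibniz rule or of Lemma \ref{lem1} (3) introduces a sign of the form $(-1)^{|\cdot||\cdot|}$, and the target formulas carry signs such as $(-1)^{|X|(|V|+|W|+|g|)}$ in which the parity $|g|$ of the metric enters only through $\pi_1^*(\mu)\pi_2^*g_2$. I would therefore carry through each case twice, once tracking just the vector-field content and once tracking just the signs, and only at the end combine the two; the parity of $g_\mu(V,W)=h^2 g_2(V,W)$ being $|g|$ is the mechanism by which $|g|$ enters case (5). Because this is a direct calculation, no clever idea is needed beyond the two structural observations above and disciplined use of Lemma \ref{lem1}.
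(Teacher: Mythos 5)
Your proposal is correct in outline: the direct expansion of $R^{L,\mu}$ via Definition \ref{def6} together with the four identities of Lemma \ref{lem1}, the observations that $[X,V]=0$ and that vertical fields annihilate functions pulled back from $M_1$, and the cancellation of the $h^{-2}$ cross-terms against the Leibniz correction are exactly the ingredients needed; I verified cases (2), (3) and (5) along your lines and they close up as you describe, including the appearance of $|g|$ in (5) through the parity of $hg_2(V,W)$. Note, however, that the paper itself gives \emph{no} proof of this proposition --- it is imported verbatim from \cite{WY} --- so there is nothing internal to compare against; your computation is the standard one and is consistent in style with the proofs the paper does carry out (e.g.\ Proposition \ref{P}, which builds on this result). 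Two small points of care: in case (4) the cancellation is not purely the ``same template'' as (3) --- it additionally uses the graded symmetry of $g_2$ and the torsion-freeness of $\nabla^{L,M_2}$ to recombine $\nabla^{L,M_2}_VW-(-1)^{|V||W|}\nabla^{L,M_2}_WV$ into $[V,W]$; and in case (6) the identity you quote should read $\nabla^{L,\mu}_V({\rm grad}_{g_1}h)=(-1)^{|V||g|}\frac{({\rm grad}_{g_1}h)(h)}{h}V$, since $|{\rm grad}_{g_1}h|=|g|$, which is precisely where the sign discipline you promise must be exercised.
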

For $\overline{X},\overline{Y},{P}\in {\rm Vect}(M)$, we define
\begin{equation}\label{b12}
\widehat{\nabla}^\mu_{\overline{X}}\overline{Y}=\nabla^{L,\mu}_{\overline{X}}\overline{Y}+\overline{X}\cdot g_\mu(\overline{Y},{P}).
\end{equation}
For ${X},{Y},{P}\in {\rm Vect}(M_1)$, we define
\begin{equation}\label{b13}
\widehat{\nabla}^{M_1}_{X}{Y}=\nabla^{L,M_1}_{X}{Y}+{X}\cdot g_1({Y},{P}).
\end{equation}
By Lemma \ref{lem1}, (\ref{b12}) and (\ref{b13}), we have
\begin{lem}\label{lem2}
For $X,Y,P\in{\rm Vect}(M_1)$ and $U,W\in {\rm Vect}(M_2)$ and $\pi(X)=g_1(X,P)$, we have
\begin{align}\label{b14}
&(1)\widehat{\nabla}^{\mu}_XY=\widehat{\nabla}^{M_1}_XY,~~(2)\widehat{\nabla}^{\mu}_XU=\frac{X(h)}{h}U,\nonumber\\
&(3)\widehat{\nabla}^{\mu}_UX=(-1)^{|U||X|}[\frac{X(h)}{h}+\pi(X)]U,\nonumber\\
&(4)\widehat{\nabla}^{\mu}_UW=-hg_2(U,W){\rm grad}_{g_1}h+\nabla^{L,N}_UW.\nonumber\\
\end{align}
\end{lem}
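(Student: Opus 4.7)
The plan is to prove each of the four identities (1)--(4) as a direct case-by-case computation from the defining formula (\ref{b12})
$$\widehat{\nabla}^{\mu}_{\overline{X}}\overline{Y}=\nabla^{L,\mu}_{\overline{X}}\overline{Y}+\overline{X}\cdot g_{\mu}(\overline{Y},P),$$
substituting Lemma \ref{lem1} for the Levi-Civita term and evaluating $g_{\mu}(\overline{Y},P)$ using the warped-product metric $g_{\mu}=\pi_1^*g_1+h^2\pi_2^*g_2$ and the hypothesis that $P\in{\rm Vect}(M_1)$.

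The first preparatory observation I would record is that, since $P\in{\rm Vect}(M_1)$, the warped-product structure forces $g_{\mu}(V,P)=0$ for every $V\in{\rm Vect}(M_2)$, while $g_{\mu}(X,P)=g_1(X,P)=\pi(X)$ for $X\in{\rm Vect}(M_1)$. The second preparatory observation is the degree count for $\pi$: from $|g|+|P|=0$ one obtains $|\pi(X)|=|X|+|P|+|g|=|X|$, so that the sign rule $U\cdot\pi(X)=(-1)^{|U||\pi(X)|}\pi(X)U$ simplifies to $(-1)^{|U||X|}\pi(X)U$. These two facts are the only nontrivial ingredients; everything else is bookkeeping.

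With these in hand I would run through the four cases as follows. For (1), with $X,Y\in{\rm Vect}(M_1)$, Lemma \ref{lem1}(1) gives $\nabla^{L,\mu}_XY=\nabla^{L,M_1}_XY$ and $g_{\mu}(Y,P)=g_1(Y,P)$, so the right-hand side of (\ref{b12}) is exactly the defining expression (\ref{b13}) for $\widehat{\nabla}^{M_1}_XY$. For (2), Lemma \ref{lem1}(2) gives the Levi-Civita term $\frac{X(h)}{h}U$, while $g_{\mu}(U,P)=0$ kills the correction term. For (3), Lemma \ref{lem1}(3) yields $(-1)^{|U||X|}\frac{X(h)}{h}U$, and the correction term is $U\cdot g_1(X,P)=U\cdot\pi(X)=(-1)^{|U||X|}\pi(X)U$ by the degree-count above; summing gives the claimed formula. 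For (4), Lemma \ref{lem1}(4) provides the Levi-Civita part and $g_{\mu}(W,P)=0$ again annihilates the correction.

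I do not expect a substantive obstacle here: the argument is mechanical once the sign convention $X\cdot f=(-1)^{|X||f|}fX$ is applied consistently. The only point where an inattentive computation could go wrong is case (3), where one must correctly identify $|\pi(X)|=|X|$ (rather than $|X|+|g|$) in order to produce the single overall sign $(-1)^{|U||X|}$ in front of the bracket; this is where I would be most careful in the write-up.
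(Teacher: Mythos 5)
Your proposal is correct and follows exactly the route the paper intends: the paper offers no written-out proof, stating only that the lemma follows from Lemma \ref{lem1} together with the definitions (\ref{b12}) and (\ref{b13}), which is precisely your case-by-case substitution. Your two preparatory observations --- that $g_\mu(V,P)=0$ for $V\in{\rm Vect}(M_2)$ because the warped metric has no cross terms, and that $|g|+|P|=0$ forces $|\pi(X)|=|X|$ so the correction in case (3) carries the sign $(-1)^{|U||X|}$ --- are exactly the two points the paper leaves implicit, and you have handled them correctly.
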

\begin{lem}\label{lem3}
For $X,Y\in{\rm Vect}(M_1)$ and $U,W,P\in {\rm Vect}(M_2)$, we have
\begin{align}\label{b15}
&(1)\widehat{\nabla}^{\mu}_XY=\nabla^{L,M_1}_XY-g_1(X,Y)P,~~(2)\widehat{\nabla}^{\mu}_XU=\frac{X(h)}{h}U+X\cdot g_\mu(U,P),\nonumber\\
&(3)\widehat{\nabla}^{\mu}_UX=(-1)^{|U||X|}\frac{X(h)}{h}U,\nonumber\\
&(4)\widehat{\nabla}^{\mu}_UW=-hg_2(U,W){\rm grad}_{g_1}h+\nabla^{L,M_2}_UW+U\cdot g_\mu(W,P).\nonumber\\
\end{align}
\end{lem}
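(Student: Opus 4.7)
\emph{Proof plan.} The approach mirrors that of Lemma \ref{lem2}: I substitute directly into the definition (\ref{b12}) of $\widehat{\nabla}^\mu$ case by case, using Lemma \ref{lem1} to evaluate the Levi-Civita piece $\nabla^{L,\mu}$ and the warped-product structure $g_\mu=\pi_1^*g_1+\pi_1^*(h^2)\pi_2^*g_2$ to evaluate the extra term of the form $\overline{X}\cdot g_\mu(\overline{Y},P)$. The only genuinely new input compared with Lemma \ref{lem2} is that $P\in \mathrm{Vect}(M_2)$ rather than $\mathrm{Vect}(M_1)$, so $g_\mu(\cdot,P)$ is controlled by $g_2$ with warping factor $h^2$ when paired with a fibre argument, while the orthogonality of base and fibre forces the corresponding extra term to vanish when paired with a base argument.

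First I would dispose of parts (1) and (3), where the second slot of $g_\mu$ is filled by a purely base vector; the warped-product formula together with Lemma \ref{lem1}(1) and (3) respectively yield these identities after identifying the residual contribution of the $P$-direction that the definition of $\widehat{\nabla}^\mu$ produces. Next I would treat (2) and (4), where the relevant argument lies in $\mathrm{Vect}(M_2)$: Lemma \ref{lem1}(2) and (4) produce the Levi-Civita piece, and the identity $g_\mu(U,P)=h^2 g_2(U,P)$ produces the extra terms $X\cdot g_\mu(U,P)$ and $U\cdot g_\mu(W,P)$ recorded in the statement.

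The main care required throughout is tracking the $\mathbb{Z}_2$-sign rule $X\cdot f=(-1)^{|X||f|}fX$ from Definition \ref{def11}, together with the $C^{\infty}(M)$-linearity in the first argument of $\widehat{\nabla}^\mu$; this produces the $(-1)^{|U||X|}$ factor in (3) and the implicit signs in (2) and (4). I do not expect a serious obstacle beyond this sign bookkeeping: each of the four identities is a one-line consequence of (\ref{b12}) combined with the appropriate entry of Lemma \ref{lem1} and the evaluation of $g_\mu$ on the relevant pair of arguments, with the bookkeeping of which factor of $h^2$ appears being the only subtle point.
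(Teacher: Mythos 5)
Your overall strategy --- substitute into (\ref{b12}), evaluate the Levi-Civita part by Lemma \ref{lem1}, and evaluate the correction term $\overline{X}\cdot g_\mu(\overline{Y},P)$ using the block structure of $g_\mu$ --- is exactly what the paper does (it states Lemma \ref{lem3} with no separate proof, as an immediate consequence of Lemma \ref{lem1} and (\ref{b12})), and it delivers parts (2), (3) and (4) without incident: in (2) and (4) the second slot of $g_\mu(\cdot,P)$ is a fibre vector, so the terms $X\cdot g_\mu(U,P)$ and $U\cdot g_\mu(W,P)$ survive exactly as written, while in (3) one has $g_\mu(X,P)=0$ and the correction disappears.

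The gap is in part (1). With $X,Y\in{\rm Vect}(M_1)$ and $P\in{\rm Vect}(M_2)$, the correction term of (\ref{b12}) is $X\cdot g_\mu(Y,P)=(-1)^{|X||Y|}g_\mu(Y,P)X$, and $g_\mu(Y,P)=\pi_1^*g_1(Y,P)+\pi_1^*(h^2)\pi_2^*g_2(Y,P)=0$ because $Y$ is a base vector and $P$ a fibre vector. Direct substitution therefore gives $\widehat{\nabla}^{\mu}_XY=\nabla^{L,M_1}_XY$ and nothing more; no term proportional to $P$ can arise from Definition \ref{def11}. Your phrase ``identifying the residual contribution of the $P$-direction'' does not correspond to any summand of (\ref{b12}): the extra term $-g_1(X,Y)P$ is the hallmark of the semi-symmetric \emph{metric} connection $\nabla^L_XY+\pi(Y)X-g(X,Y)P$ treated in \cite{WY}, not of the non-metric connection $\nabla^L_XY+X\cdot g(Y,P)$ used here. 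You should either flag part (1) of the statement as inconsistent with (\ref{b12}) (noting that Proposition \ref{prop10}(2) inherits the discrepancy through the term involving $g_1(X,Y)\nabla^{L,M_2}_VP$), or identify the modified definition of $\widehat{\nabla}^{\mu}$ from which the extra term is supposed to follow; as your plan stands, it cannot produce it.
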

By Proposition \ref{prop1}, Lemma \ref{lem2} and Lemma \ref{lem3}, we get the following propositions,
\begin{prop}\label{P}
For $X,Y,Z,P\in{\rm Vect}(M_1)$ and $U,V,W\in {\rm Vect}(M_2)$, we have
\begin{align}\label{b16}
(1)R_{\widehat{\nabla}^\mu}(X,Y)Z&=R_{\widehat{\nabla}^{M_1}}(X,Y)Z,\nonumber\\
(2)R_{\widehat{\nabla}^\mu}(V,X)Y&=-(-1)^{|V|(|X|+|Y|)}\left[\frac{H^h_{M_1}(X,Y)}{h}+(-1)^{|X||Y|}g_1(Y,\nabla^{L,M_1}_XP)-\pi(X)\pi(Y)\right]V,\nonumber\\
(3)R_{\widehat{\nabla}^\mu}(X,Y)V&=0,~~(4)R_{\widehat{\nabla}^\mu}(V,W)X=0,\nonumber\\
(5)R_{\widehat{\nabla}^\mu}(X,V)W&=-(-1)^{|X|(|V|+|W|+|g|)}{g_\mu(V,W)}\left[\frac{\nabla^{L,M_1}_X({\rm grad}_{g_1}h)}{h}+(-1)^{(|X|+|P|)|g|}\frac{P(h)}{h}X\right],\nonumber\\
{\rm when}~~|g|=|P|=0&,~~{\rm then}\nonumber\\
R_{\widehat{\nabla}^\mu}(X,V)W&=-(-1)^{|X|(|V|+|W|)}{g_\mu(V,W)}\left[\frac{\nabla^{L,M_1}_X({\rm grad}_{g_1}h)}{h}+\frac{P(h)}{h}X\right],\nonumber\\
(6)R_{\widehat{\nabla}^\mu}(U,V)W&=R^{L,M_2}(U,V)W+\left[(-1)^{|g|(|W|+|g|)}\frac{({\rm grad}_{g_1}h)(h)}{h^2}+(-1)^{|P|(|W|+|g|)}\frac{P(h)}{h}\right]\nonumber\\
&\cdot\left[(-1)^{|V||W|}(-1)^{|P||U|}g_\mu(U,W)V-(-1)^{|U|(|V|+|W|)}(-1)^{|P||V|}g_\mu(V,W)U\right],\nonumber\\
{\rm when}~~|g|=|P|=0&,~~{\rm then}\nonumber\\
R_{\widehat{\nabla}^\mu}(U,V)W&=R^{L,M_2}(U,V)W+\left[\frac{({\rm grad}_{g_1}h)(h)}{h^2}+\frac{P(h)}{h}\right]\nonumber\\
&\cdot\left[(-1)^{|V||W|}g_\mu(U,W)V-(-1)^{|U|(|V|+|W|)}g_\mu(V,W)U\right].\nonumber\\
\end{align}
\end{prop}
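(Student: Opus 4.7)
The plan is to apply Proposition \ref{prop1} with $g=g_\mu$ and $\nabla^L=\nabla^{L,\mu}$ to each of the six combinations of base/fiber vector fields listed in the statement, and then to substitute the Levi--Civita curvature pieces from Proposition \ref{prop2} together with the covariant derivatives of $P$ supplied by Lemma \ref{lem2}. Since $P\in{\rm Vect}(M_1)$, the computation reduces to exploiting the orthogonal decomposition of $g_\mu$ between the $M_1$ and $M_2$ factors and tracking the resulting $\mathbb{Z}_2$-signs.

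Four elementary observations do most of the work: (i) $\pi(V)=g_\mu(V,P)=0$ for $V\in{\rm Vect}(M_2)$, since $P\in{\rm Vect}(M_1)$; (ii) $g_\mu(X,V)=0$ for $X\in{\rm Vect}(M_1)$, $V\in{\rm Vect}(M_2)$; (iii) $\nabla^{L,\mu}_X P=\nabla^{L,M_1}_X P\in{\rm Vect}(M_1)$ by Lemma \ref{lem2}(1), so $g_\mu(V,\nabla^{L,\mu}_X P)=0$; (iv) $\nabla^{L,\mu}_V P=(-1)^{|V||P|}\frac{P(h)}{h}V$ by Lemma \ref{lem2}(3), which contributes an extra $P(h)/h$ factor whenever paired with another fiber vector inside $g_\mu$. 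Granting these, cases (1), (3), (4) are essentially immediate: (1) collapses onto the formula for the base connection using Proposition \ref{prop2}(1), while (3) and (4) vanish because every term in Proposition \ref{prop1} is killed by either (i) or (ii).

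The substantive work is cases (2), (5), (6). For each, I would expand Proposition \ref{prop1} line by line, eliminate the vanishing pieces using (i)--(iii), and collect the remaining $R^{L,\mu}$ contribution from Proposition \ref{prop2} together with the new $P(h)/h$ correction coming from (iv). In case (2) all surviving contributions are proportional to $V$, and one recovers precisely the base formula $R_{\widehat{\nabla}^{M_1}}$ visible through the $H^h_{M_1}/h$, $g_1(Y,\nabla^{L,M_1}_X P)$ and $\pi(X)\pi(Y)$ terms. In cases (5) and (6) one must convert $g_2$ factors into $g_\mu$ factors via $g_\mu(V,W)=h^2 g_2(V,W)$ on fiber arguments, so that the prefactors $({\rm grad}_{g_1}h)(h)/h^2$ (from Proposition \ref{prop2}) and $P(h)/h$ (from (iv)) multiply the same skew-symmetrized combination $g_\mu(U,W)V-g_\mu(V,W)U$, up to appropriate signs.

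The main obstacle is sign bookkeeping in the $\mathbb{Z}_2$-graded setting. The advertised prefactors in (5) and (6) are not the naive outputs of the substitution: to obtain them one must use $|P|=|g|$ (from Definition \ref{def11}) together with the homogeneity constraint $|V|+|W|+|g|=0$ that necessarily holds on the support of $g_\mu(V,W)$, as well as $|g|^2=|g|$ in $\mathbb{Z}_2$. For instance, in case (5) the direct substitution produces the sign $(-1)^{(|V|+|W|)|P|+|X||g|}$ in front of the $P(h)/h$ correction, and one converts this into the stated $(-1)^{(|X|+|P|)|g|}$ precisely by these identities; case (6) is analogous, yielding the factors $(-1)^{|g|(|W|+|g|)}$ and $(-1)^{|P|(|W|+|g|)}$ only after the same rewriting. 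Once these graded identities are applied, no geometric input beyond Proposition \ref{prop1}, Proposition \ref{prop2}, and Lemma \ref{lem2} is needed.
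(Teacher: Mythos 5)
Your overall strategy is exactly the paper's: expand $R_{\widehat{\nabla}^\mu}$ via Proposition \ref{prop1} applied to $(M,g_\mu)$, substitute the Levi--Civita curvatures from Proposition \ref{prop2} and the Levi--Civita derivatives of $P$ from the warped-product lemma, and use the block-diagonality of $g_\mu$ to kill the cross terms. (Minor slip: the formulas you quote for $\nabla^{L,\mu}_XP$ and $\nabla^{L,\mu}_VP$ are the Levi--Civita ones and come from Lemma \ref{lem1}, not Lemma \ref{lem2}, which describes $\widehat{\nabla}^\mu$ and carries an extra $\pi$-term.)

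There is, however, a genuine flaw in the step you single out as the crux. To reconcile the naive sign $(-1)^{|P|(|V|+|W|)}$ with the advertised $(-1)^{(|X|+|P|)|g|}$-type prefactors you invoke the identity $|V|+|W|+|g|=0$ ``on the support of $g_\mu(V,W)$''. That is false in general: $g_\mu(V,W)$ is a homogeneous function of degree $|V|+|W|+|g|$, and when this degree is odd the function is nilpotent but need not vanish, so you cannot impose a parity constraint on its support. Fortunately the constraint is also unnecessary: the discrepancy you observe is exactly the Koszul sign $(-1)^{|P|(|V|+|W|+|g|)}$ produced by commuting the two scalar factors $\tfrac{P(h)}{h}$ (of degree $|P|$) and $g_\mu(V,W)$ (of degree $|V|+|W|+|g|$) past one another when you normalize the term to the form $g_\mu(V,W)\tfrac{P(h)}{h}X$; combining it with $(-1)^{|P|(|V|+|W|)}$ gives $(-1)^{|P||g|}$ identically, with no hypothesis on $|V|,|W|$ and without even using $|P|=|g|$. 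This is precisely the manipulation carried out in the paper's identities (\ref{f5}) and (\ref{f09}). Replace your degree-constraint argument by this commutation and the proof closes.
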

\begin{proof}
(1)By Lemma \ref{lem1} and Definition \ref{def6}, we get
\begin{align}\label{f1}
R_{\widehat{\nabla}^\mu}(X, Y )Z &={\widehat{\nabla}^\mu}_X{\widehat{\nabla}^\mu}_Y-(-1)^{|X||Y|}{\widehat{\nabla}^\mu}_Y{\widehat{\nabla}^\mu}_X-{\widehat{\nabla}^\mu}_{[X,Y]}Z,\nonumber\\
&={\widehat{\nabla}^{M_1}}_X{\widehat{\nabla}^{M_1}}_Y-(-1)^{|X||Y|}{\widehat{\nabla}^{M_1}}_Y{\widehat{\nabla}^{M_1}}_X-{\widehat{\nabla}^{M_1}}_{[X,Y]}Z,\nonumber\\
&=R_{\widehat{\nabla}^{M_1}}(X, Y )Z,\nonumber\\
\end{align}
 so (1) holds.\\
\indent (2)By Lemma \ref{lem1} and Proposition \ref{prop1}, we have
\begin{align}\label{f2}
R_{\widehat{\nabla}^\mu}(V,X)Y&=R^{L,\mu}(V,X)Y+(-1)^{(|V|+|X|)|Y|}[g_\mu(Y,\nabla^{L,\mu}_VP)X
-(-1)^{|X||V|}g_\mu(Y,\nabla^{L,\mu}_XP)V]\nonumber\\
&+(-1)^{(|X|+|V|)|Y|}\pi(Y)[(-1)^{|X||V|}\pi(X)V
-\pi(V)X],\nonumber\\
&=R^{L,\mu}(V,X)Y
+(-1)^{(|V|+|X|)|Y|}(-1)^{|X||V|}[\pi(Y)\pi(X)V-g_1(Y,\nabla^{L,M_1}_XP)V],\nonumber\\
&=-(-1)^{|V|(|X|+|Y|)}\left[\frac{H^h_{M_1}(X,Y)}{h}+(-1)^{|X||Y|}g_1(Y,\nabla^{L,M_1}_XP)-\pi(X)\pi(Y)\right]V,\nonumber\\
\end{align}
so we get (2).\\
\indent (3)By Lemma \ref{lem1} and Proposition \ref{prop1}, we have
\begin{align}\label{f3}
R_{\widehat{\nabla}^\mu}(X,Y)V&=R^{L,\mu}(X,Y)V+(-1)^{(|X|+|Y|)|V|}[g_\mu(V,\nabla^{L,\mu}_XP)Y
-(-1)^{|X||Y|}g_\mu(V,\nabla^{L,\mu}_YP)X]\nonumber\\
&+(-1)^{(|X|+|Y|)|V|}\pi(V)[(-1)^{|X||Y|}\pi(Y)X
-\pi(X)Y]\nonumber\\
&=0,\nonumber\\
\end{align}
then we get (3).\\
\indent(4)Similar to (3), we get $R_{\widehat{\nabla}^\mu}(V,W)X=0$.\\
\indent (5)By Lemma \ref{lem1} and Proposition \ref{prop1}, we have
\begin{align}\label{f4}
R_{\widehat{\nabla}^\mu}(X,V)W&=R^{L,\mu}(X,V)W+(-1)^{(|V|+|X|)|W|}[g_\mu(W,\nabla^{L,\mu}_XP)V
-(-1)^{|X||V|}g_\mu(W,\nabla^{L,\mu}_VP)X]\nonumber\\
&+(-1)^{(|X|+|V|)|W|}\pi(W)[(-1)^{|X||V|}\pi(V)X
-\pi(X)V],\nonumber\\
&=-(-1)^{(|V|+|W|+|g|)|X|}\frac{g_\mu(V,W)}{h}\nabla^{L,M_1}_X(grad_{g_1}h)\nonumber\\
&-(-1)^{|W|(|X|+|V|)}(-1)^{|X||V|}g_\mu(W,(-1)^{|P||V|}\frac{P(h)}{h}V)X\nonumber\\
\end{align}
and by
\begin{align}\label{f5}
&(-1)^{|W|(|X|+|V|)}(-1)^{|X||V|}g_\mu(W,(-1)^{|P||V|}\frac{P(h)}{h}V)X\nonumber\\
&=(-1)^{|X|(|W|+|V|+|g|)}(-1)^{(|X|+|P|)|g|}g_\mu(V,W)\frac{P(h)}{h}X ,\nonumber\\
\end{align}
so we have
\begin{align}\label{f6}
R_{\widehat{\nabla}^\mu}(X,V)W&=-(-1)^{|X|(|V|+|W|+|g|)}{g_\mu(V,W)}\left[\frac{\nabla^{L,M_1}_X({\rm grad}_{g_1}h)}{h}+(-1)^{(|X|+|P|)|g|}\frac{P(h)}{h}X\right].\nonumber\\
\end{align}
Obviously, we can get
\begin{align}\label{f7}
{\rm when}~~|g|=|P|=0&,~~{\rm then}\nonumber\\
R_{\widehat{\nabla}^\mu}(X,V)W&=-(-1)^{|X|(|V|+|W|)}{g_\mu(V,W)}\left[\frac{\nabla^{L,M_1}_X({\rm grad}_{g_1}h)}{h}+\frac{P(h)}{h}X\right].\nonumber\\
\end{align}
\indent (6)By Lemma \ref{lem1} and Proposition \ref{prop1}, we have
\begin{align}\label{f8}
R_{\widehat{\nabla}^\mu}(U,V)W&=R^{L,\mu}(U,V)W+(-1)^{(|V|+|U|)|W|}[g_\mu(W,\nabla^{L,\mu}_UP)V
-(-1)^{|U||V|}g_\mu(W,\nabla^{L,\mu}_VP)U]\nonumber\\
&+(-1)^{(|V|+|U|)|W|}\pi(W)[(-1)^{|U||V|}\pi(V)U
-\pi(U)V]\nonumber\\
&=R^{L,M_2}(U,V)W-(-1)^{(|V|+|W|)|U|}g_2(V,W)(grad_{g_1}h)(h)U+(-1)^{|W||V|}g_2(U,W)\nonumber\\
&(grad_{g_1}h)(h)V+(-1)^{(|V|+|U|)|W|}(-1)^{|U||P|}(-1)^{|W||P|}\frac{P(h)}{h}g_\mu(W,U)V\nonumber\\
&-(-1)^{(|V|+|U|)|W|}(-1)^{|V||U|}(-1)^{|V||P|}(-1)^{|W||P|}\frac{P(h)}{h}g_\mu(W,V)U,\nonumber\\
\end{align}
by
\begin{align}\label{f9}
&-(-1)^{(|V|+|W|)|U|}g_2(V,W)(grad_{g_1}h)(h)U+(-1)^{|W||V|}g_2(U,W)(grad_{g_1}h)(h)V\nonumber\\
&=(-1)^{|g_1|(|W|+|g_2|)}(grad_{g_1}h)(h)[-(-1)^{(|V|+|W|)|U|}(-1)^{|V||g_1|}g_2(V,W)U+(-1)^{|W||U|}(-1)^{|U||g_1|}g_2(U,W)V] \nonumber\\
\end{align}
and
\begin{align}\label{f09}
&(-1)^{(|V|+|U|)|W|}(-1)^{|U||P|}(-1)^{|W||P|}\frac{P(h)}{h}g_\mu(W,U)V-(-1)^{(|V|+|U|)|W|}\nonumber\\
&(-1)^{|V||U|}(-1)^{|V||P|}(-1)^{|W||P|}\frac{P(h)}{h}g_\mu(W,V)U\nonumber\\
&=(-1)^{|P|(|W|+|g|)}\frac{P(h)}{h}[(-1)^{|V||W|}(-1)^{|P||U|}g_\mu(U,W)V-(-1)^{(|W|+|V|)|U|}(-1)^{|P||V|}g_\mu(V,W)U] ,\nonumber\\
\end{align}
so we have
\begin{align}\label{f10}
R_{\widehat{\nabla}^\mu}(U,V)W&=R^{L,M_2}(U,V)W+\left[(-1)^{|g|(|W|+|g|)}\frac{({\rm grad}_{g_1}h)(h)}{h^2}+(-1)^{|P|(|W|+|g|)}\frac{P(h)}{h}\right]\nonumber\\
&\cdot\left[(-1)^{|V||W|}(-1)^{|P||U|}g_\mu(U,W)V-(-1)^{|U|(|V|+|W|)}(-1)^{|P||V|}g_\mu(V,W)U\right].\nonumber\\
\end{align}
Obviously, we can get
\begin{align}\label{f11}
&{\rm when}~~|g|=|P|=0,~~{\rm then}\nonumber\\
&R_{\widehat{\nabla}^\mu}(U,V)W\nonumber\\
&=R^{L,M_2}(U,V)W+\left[\frac{({\rm grad}_{g_1}h)(h)}{h^2}+\frac{P(h)}{h}\right]\cdot\left[(-1)^{|V||W|}g_\mu(U,W)V-(-1)^{|U|(|V|+|W|)}g_\mu(V,W)U\right].\nonumber\\
\end{align}
\end{proof}
Similarly, we have
\begin{prop}\label{prop10}
For $X,Y,Z\in{\rm Vect}(M_1)$ and $U,V,W,P\in {\rm Vect}(M_2)$, we have
\begin{align}\label{b17}
(1)R_{\widehat{\nabla}^\mu}(X,Y)Z&=R^{L,M_1}(X,Y)Z,\nonumber\\
(2)R_{\widehat{\nabla}^\mu}(V,X)Y&=-(-1)^{|V|(|X|+|Y|)}\frac{H^h_{M_1}(X,Y)}{h}V-(-1)^{|X||Y|}hg_2(V,P)g_1(Y,{\rm grad}_{g_1}h)X\nonumber\\
&-(-1)^{|g(X,Y)||V|}g_1(X,Y)[\nabla^{L,M_2}_VP-hg_2(V,P){\rm grad}_{g_1}h],\nonumber\\
(3)R_{\widehat{\nabla}^\mu}(X,Y)V&=(-1)^{(|X|+|Y|)|V|}\pi(V)[\frac{X(h)}{h}Y-(-1)^{|X||Y|}\frac{Y(h)}{h}X],\nonumber\\
(4)R_{\widehat{\nabla}^\mu}(V,W)X&=-(-1)^{|X||W|}hg_2(V,P)g_1(X,{\rm grad}_{g_1}h)W+(-1)^{(|V|+|X|)|W|}hg_2(W,P)g_1(X,{\rm grad}_{g_1}h)V,\nonumber\\
(5)R_{\widehat{\nabla}^\mu}(X,V)W&=-(-1)^{|X|(|V|+|W|+|g|)}\frac{{g_\mu(V,W)}}{h}{\nabla^{L,M_1}_X({\rm grad}_{g_1}h)}+(-1)^{(|X|+|V|)|W|}[(-1)^{|X||W|}\frac{X(h)}{h}\nonumber\\
&g_\mu(W,P)V-(-1)^{|X||V|}g_\mu(W,\nabla^{L,M_2}_VP)X]+(-1)^{(|X|+|V|)|W|}(-1)^{|X||V|}\pi(W)\pi(V)X,\nonumber\\
(6)R_{\widehat{\nabla}^\mu}(U,V)W&=R^{L,M_2}(U,V)W-(-1)^{|U|(|V|+|W|)}g_2(V,W)({\rm grad}_{g_1}h)(h)U+(-1)^{|V||W|}g_2(U,W)\nonumber\\
&({\rm grad}_{g_1}h)(h)V+(-1)^{(|U|+|V|)|W|}[g_\mu(W,\nabla^{L,M_2}_UP)V-(-1)^{|U||V|}g_\mu(W,\nabla^{L,M_2}_VP)U]\nonumber\\
&+(-1)^{(|U|+|V|)|W|}\pi(W)[(-1)^{|U||V|}\pi(V)U-\pi(U)V].\nonumber\\
\end{align}
\end{prop}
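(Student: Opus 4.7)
The plan is to derive each of the six identities from the master formula of Proposition \ref{prop1}, applied to the warped product data, by substituting the expressions for the Levi-Civita curvature from Proposition \ref{prop2}, for $\nabla^{L,\mu}_\bullet P$ from Lemma \ref{lem1}, and for the $g_\mu$-pairings with $P\in{\rm Vect}(M_2)$ that are dictated by the block form $g_\mu=\pi_1^*g_1+h^2\pi_2^*g_2$. The organizational remark is that the only things that change relative to Proposition \ref{P} are (i) the one-form $\pi(\cdot)=g_\mu(\cdot,P)$, which now vanishes on ${\rm Vect}(M_1)$ and equals $h^2g_2(\cdot,P)$ on ${\rm Vect}(M_2)$, and (ii) the two covariant derivatives of $P$, which by Lemma \ref{lem1} are $\nabla^{L,\mu}_XP=\tfrac{X(h)}{h}P$ for $X\in{\rm Vect}(M_1)$ and $\nabla^{L,\mu}_VP=-hg_2(V,P)\,{\rm grad}_{g_1}h+\nabla^{L,M_2}_VP$ for $V\in{\rm Vect}(M_2)$.

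Next I would run the six cases in sequence. Case (1) is immediate: by Proposition \ref{prop2}(1) the leading term is $R^{L,M_1}(X,Y)Z$; the $\nabla^{L,\mu}P$ contributions are multiples of $g_\mu(Z,P)=0$ (with $Z\in{\rm Vect}(M_1)$), and the $\pi$-triple vanishes similarly. Case (3) keeps only the $\nabla^{L,\mu}P$ piece: $g_\mu(V,\nabla^{L,\mu}_XP)=\tfrac{X(h)}{h}\pi(V)$, yielding the stated expression; the $R^{L,\mu}$ and $\pi(X),\pi(Y)$ contributions drop. Case (4) keeps only the $\pi$-triple, after using $\nabla^{L,\mu}_VP$ and $\nabla^{L,\mu}_WP$ paired against $X\in{\rm Vect}(M_1)$ — only the $-hg_2(V,P){\rm grad}_{g_1}h$ parts survive because $g_\mu(X,\nabla^{L,M_2}\cdots)=0$. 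Cases (2), (5) and (6) are the richest: each collects contributions from all three summands of Proposition \ref{prop1}, and a careful use of $g_\mu(Y,\nabla^{L,\mu}_VP)=-hg_2(V,P)g_1(Y,{\rm grad}_{g_1}h)$ and $g_\mu(W,\nabla^{L,\mu}_VP)=g_\mu(W,\nabla^{L,M_2}_VP)$ (the gradient term drops because ${\rm grad}_{g_1}h\in{\rm Vect}(M_1)$) produces the advertised formulas.

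The main obstacle is the sign bookkeeping, not the conceptual content: every substitution is a one-line identity, but each step in the super setting drags along Koszul signs coming from three sources — the defining relation $\widehat{\nabla}_XY=\nabla^L_XY+(-1)^{|X||Y|}g(Y,P)X$, the symmetry $g(A,B)=(-1)^{|A||B|}g(B,A)$ used to move $P$ past its partner, and the gradient identity $X(h)=(-1)^{|g|\cdot 0}g_1(X,{\rm grad}_{g_1}h)$. Keeping these straight (especially in cases (5) and (6), where the terms $\tfrac{P(h)}{h}X$ and $\tfrac{P(h)}{h}U$ acquire sign twists of the shape $(-1)^{|P|(|W|+|g|)}$) is the only real work; once it is done, collecting like terms and recognizing $({\rm grad}_{g_1}h)(h)/h^2$ and $P(h)/h$ as a single factor reproduces the statement verbatim.
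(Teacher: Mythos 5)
Your overall strategy --- substitute Lemma \ref{lem1}, Proposition \ref{prop2} and the block structure of $g_\mu$ into the master formula of Proposition \ref{prop1} --- is exactly how the paper handles the companion case $P\in{\rm Vect}(M_1)$ (Proposition \ref{P}), and it does produce items (1), (3), (5), (6) correctly. But your claim that this route ``reproduces the statement verbatim'' fails for item (2), and this is a genuine gap rather than a sign-bookkeeping issue. Every correction term in Proposition \ref{prop1} is a scalar multiple of one of the first two arguments, and by Proposition \ref{prop2}(2) the term $R^{L,\mu}(V,X)Y$ is itself a multiple of $V$; hence your substitution forces $R_{\widehat{\nabla}^\mu}(V,X)Y$ to lie in the span of $V$ and $X$. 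The third term of the stated formula (2), namely $-(-1)^{|g(X,Y)||V|}g_1(X,Y)[\nabla^{L,M_2}_VP-hg_2(V,P){\rm grad}_{g_1}h]$, is a multiple of $\nabla^{L,\mu}_VP$ and generically lies in the span of neither $V$ nor $X$, so it cannot arise from your computation; you would obtain only the first two terms of (2).

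The missing term originates in Lemma \ref{lem3}(1), which asserts $\widehat{\nabla}^{\mu}_XY=\nabla^{L,M_1}_XY-g_1(X,Y)P$ for $X,Y\in{\rm Vect}(M_1)$ and $P\in{\rm Vect}(M_2)$: differentiating the $-g_1(X,Y)P$ piece in the $V$ direction is precisely what creates $-g_1(X,Y)\nabla^{L,\mu}_VP$. So to reach item (2) as stated you must compute $R_{\widehat{\nabla}^\mu}(V,X)Y$ directly from the connection formulas of Lemma \ref{lem3}, not from Proposition \ref{prop1}. (Note, moreover, that Lemma \ref{lem3}(1) is in tension with Definition \ref{def11}: since $g_\mu(Y,P)=0$ when $Y\in{\rm Vect}(M_1)$ and $P\in{\rm Vect}(M_2)$, the definition gives $\widehat{\nabla}^{\mu}_XY=\nabla^{L,M_1}_XY$ with no extra term --- the summand $-g_1(X,Y)P$ is the signature of a semi-symmetric \emph{metric} connection. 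Your derivation and the printed formula (2) cannot both be correct, and you should flag this discrepancy rather than assert agreement.) A smaller slip: in case (4) you say ``only the $\pi$-triple survives,'' but the $\pi$-triple is killed by $\pi(X)=0$; what survives there is the $g_\mu(X,\nabla^{L,\mu}_VP)$ summand, as the rest of your own sentence correctly describes.
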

In the following, we compute the Ricci tensor of $M$. Let $M_1$ (resp. $M_2$) have the $(p,m)$ (resp. $(q,n)$) dimension. Let $\partial_{x^I}=\{\partial_{x^a},\partial_{\xi^A}\}$ (resp.
$\partial_{y^J}=\{\partial_{y^b},\partial_{\eta^B}\}$) denote
the natural tangent frames on $M_1$ (resp. $M_2$). Let ${\rm Ric}^{L,\mu}$ (resp. ${\rm Ric}^{L,M_1}$, ${\rm Ric}^{L,M_2}$)  denote the Ricci tensor of $(M,g_\mu)$ (resp. $(M_1,g_1)$, $(M_2,g_2)$).  Then by (\ref{a4}), (\ref{a7}) and (\ref{b3}), we have

\begin{prop}\label{prop12}
The following equalities holds
\begin{align}\label{b18}
(1){\rm Ric}^{L,\mu}(\partial_{x^I},\partial_{x^K})&={\rm Ric}^{L,M_1}(\partial_{x^I},\partial_{x^K})-\frac{(q-n)}{h}H^h_{M_1}(\partial_{x^I},\partial_{x^K}),\nonumber\\
(2){\rm Ric}^{L,\mu}(\partial_{x^I},\partial_{y^J})&={\rm Ric}^{L,\mu}(\partial_{y^J},\partial_{x^I})=0,\nonumber\\
(3){\rm Ric}^{L,\mu}(\partial_{y^L},\partial_{y^J})&={\rm Ric}^{L,M_2}(\partial_{y^L},\partial_{y^J})-g_\mu(\partial_{y^L},\partial_{y^J})\cdot[\frac{\triangle^L_{g_1}(h)}{h}+(q-n-1)\frac{({\rm grad}_{g_1}h)(h)}{h^2}].\nonumber\\
\end{align}
\end{prop}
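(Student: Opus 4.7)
The plan is to plug the curvature formulas from Proposition \ref{prop2} into the definition of the Ricci tensor (equation (\ref{a4})) and take the appropriate super-traces. Since $M=M_1\times_\mu M_2$, the natural frame splits as $\{\partial_{x^I},\partial_{y^J}\}$, so every ${\rm Ric}^{L,\mu}$ sum breaks into an $M_1$-piece and an $M_2$-piece, which I will analyze separately.

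For part (1), let $X=\partial_{x^I}$ and $Y=\partial_{x^K}$. The $M_1$-frame contribution $\sum_J (-1)^{\cdots}[R^{L,\mu}(\partial_{x^J},X)Y+(-1)^{|X||Y|}R^{L,\mu}(\partial_{x^J},Y)X]^J/2$ collapses to ${\rm Ric}^{L,M_1}(X,Y)$ by (\ref{b3})(1), since the Levi-Civita curvature on two base vectors is purely the $M_1$-curvature. For the $M_2$-frame contribution I use (\ref{b3})(2): $R^{L,\mu}(V,X)Y=-(-1)^{|V|(|X|+|Y|)}\frac{H^h_{M_1}(X,Y)}{h}V$. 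Reading off the $V$-coefficient and summing with the sign factor in (\ref{a4}) turns $\sum_J (-1)^{|\partial_{y^J}|}$ into the superdimension $(q-n)$ of $M_2$; after symmetrizing in $X,Y$ (which is automatic because $H^h_{M_1}$ is symmetric in the $\mathbb{Z}_2$-graded sense) we obtain the stated $-\frac{q-n}{h}H^h_{M_1}(X,Y)$ correction.

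For part (2), set $X=\partial_{x^I}$, $V=\partial_{y^J}$. The $M_1$-frame part of ${\rm Ric}^{L,\mu}(X,V)$ involves $R^{L,\mu}(\partial_{x^K},X)V$ and $R^{L,\mu}(\partial_{x^K},V)X$: the first has no $\partial_{x^K}$-coefficient (by (\ref{b3})(3), it lies in $\mathrm{Vect}(M_2)$), and the second has no $\partial_{x^K}$-coefficient either (by (\ref{b3})(5), it is a multiple of $\nabla^{L,M_1}_X(\mathrm{grad}_{g_1}h)\in \mathrm{Vect}(M_1)$, but it takes $V$ and $\partial_{x^K}$ as its second two arguments with $V$ in the middle so the projection still lives in $\mathrm{Vect}(M_1)$ — I will just note that the $\partial_{y^K}$-index is needed). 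Symmetrically, the $M_2$-frame part vanishes because $R^{L,\mu}(\partial_{y^L},X)V$ and $R^{L,\mu}(\partial_{y^L},V)X$ have no $\partial_{y^L}$-coefficient by (\ref{b3})(4) and (\ref{b3})(5). So everything cancels and ${\rm Ric}^{L,\mu}(\partial_{x^I},\partial_{y^J})=0$.

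For part (3), set $V=\partial_{y^L}$, $W=\partial_{y^J}$. The $M_2$-frame contribution collapses to ${\rm Ric}^{L,M_2}(V,W)$ from the first term of (\ref{b3})(6), while the two correction terms in (\ref{b3})(6) supertrace to $(q-n-1)\cdot\frac{({\rm grad}_{g_1}h)(h)}{h^2}g_\mu(V,W)$ (one index is consumed, so the supertrace of the identity on the remaining directions is $q-n-1$, and the $g_2$ gets converted into $g_\mu$ by absorbing an $h^2$). The $M_1$-frame contribution uses (\ref{b3})(5): $R^{L,\mu}(\partial_{x^I},V)W$ produces an $\partial_{x^I}$-coefficient proportional to $-\frac{g_\mu(V,W)}{h}\bigl(\nabla^{L,M_1}_{\partial_{x^I}}\mathrm{grad}_{g_1}h\bigr)^I$; supertracing gives $-\frac{g_\mu(V,W)}{h}\,\mathrm{Div}_L(\mathrm{grad}_{g_1}h)=-\frac{g_\mu(V,W)}{h}\triangle^L_{g_1}(h)$. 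Summing the two pieces yields the displayed formula. The main obstacle throughout is bookkeeping of the $\mathbb{Z}_2$-signs coming from (\ref{a4}) and from (\ref{b3}); in particular, verifying that the super-trace $\sum_J (-1)^{|\partial_{y^J}|(|\partial_{y^J}|+\cdots)}$ yields exactly the superdimension $q-n$ of $M_2$ (and $q-n-1$ after one direction is used up) is where the computation must be done with care rather than symbolically abbreviated.
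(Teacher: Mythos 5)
Your proposal follows exactly the route the paper intends: it states Proposition \ref{prop12} as a direct consequence of the Ricci definition (\ref{a4}), the Laplacian (\ref{a7}) and the curvature formulas (\ref{b3}), splitting the supertrace into an $M_1$-frame piece and an $M_2$-frame piece, and the detailed version of this computation appears in the paper only for the analogous Proposition \ref{prop3}. Your parts (1) and (3) are correct, including the identification of the supertraces with $q-n$ and $q-n-1$ and the conversion of $g_2$ into $g_\mu$ via $h^2$. The one slip is in part (2): the term $R^{L,\mu}(\partial_{x^K},\partial_{y^J})\partial_{x^I}$ has a base vector, not a fiber vector, in its third slot, so (\ref{b3})(5) does not apply, and your parenthetical claim that it is a multiple of $\nabla^{L,M_1}_{X}({\rm grad}_{g_1}h)\in{\rm Vect}(M_1)$ would, if true, give it a nonzero $\partial_{x^K}$-coefficient and contradict the vanishing you assert. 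The correct argument is to use the graded antisymmetry (\ref{a3}) to rewrite it as $-(-1)^{|\partial_{x^K}||\partial_{y^J}|}R^{L,\mu}(\partial_{y^J},\partial_{x^K})\partial_{x^I}$, which by (\ref{b3})(2) is proportional to $\partial_{y^J}\in{\rm Vect}(M_2)$ and therefore has no $\partial_{x^K}$-component; with that repair your proof of (2), and hence the whole proposition, goes through.
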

Let ${\rm Ric}^{\widehat{\nabla}^\mu}$ (resp. ${\rm Ric}^{\widehat{\nabla}^{M_1}}$) denote the Ricci tensor of $(M,\widehat{\nabla}^\mu,g_\mu)$ (resp. $(M_1,\widehat{\nabla}^{M_1},g_1$). Then by Proposition \ref{P} and (\ref{a4}), (\ref{a6}), we have
\begin{prop}\label{prop3}
The following equalities holds
\begin{align}\label{b19}
(1){\rm Ric}^{\widehat{\nabla}^\mu}(\partial_{x^I},\partial_{x^K})&={\rm Ric}^{\widehat{\nabla}^{M_1}}(\partial_{x^I},\partial_{x^K})-(q-n)\bigg[
\frac{H^h_{M_1}(\partial_{x^I},\partial_{x^K})}{h}-\pi(\partial_{x^I})\pi(\partial_{x^K})\\
&+\frac{(-1)^{|\partial_{x^I}||\partial_{x^K}|}g_1(\partial_{x^K},\nabla^{L,M}_{\partial_{x^I}}P)}{2}
+\frac{g_1(\partial_{x^I},\nabla^{L,M_1}_{\partial_{x^K}}P)}{2}\bigg],\nonumber\\
(2){\rm Ric}^{\widehat{\nabla}^\mu}(\partial_{x^I},\partial_{y^J})&={\rm Ric}^{\widehat{\nabla}^\mu}(\partial_{y^J},\partial_{x^I})=0,\nonumber\\
{\rm when}~~|g|=|P|=0,~~{\rm then}&\nonumber\\
(3){\rm Ric}^{\widehat{\nabla}^\mu}(\partial_{y^L},\partial_{y^J})&={\rm Ric}^{L,M_2}(\partial_{y^L},\partial_{y^J})-g_\mu(\partial_{y^L},\partial_{y^J})
[\frac{\triangle^L_{g_1}(h)}{h}+(q-n-1)\frac{({\rm grad}_{g_1}h)(h)}{h^2}\nonumber\\
&+(q-n-1+p-m)\frac{P(h)}{h}].\nonumber\\
\end{align}
\end{prop}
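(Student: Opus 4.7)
The plan is to apply Definition \ref{def7} directly to the natural tangent frame $\{\partial_{x^I},\partial_{y^J}\}$ on $M=M_1\times_\mu M_2$, split the summation index appearing in the Ricci formula into an $M_1$-part and an $M_2$-part, and evaluate each sub-sum using the corresponding case of Proposition \ref{P}. The key combinatorial input is the super-trace identity $\sum_{I}(-1)^{|\partial_{x^I}|}\delta^I_I=p-m$ on $M_1$ and $\sum_{J}(-1)^{|\partial_{y^J}|}\delta^J_J=q-n$ on $M_2$, which emerges naturally after extracting the $I$-th coefficient and combining with the sign prefactor $(-1)^{|\partial_{x^I}|(|\partial_{x^I}|+|X|+|Y|)}$ in Definition \ref{def7}.

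For part (1), set $X=\partial_{x^I}$ and $Y=\partial_{x^K}$. When the summation index lies in $M_1$, Proposition \ref{P}(1) gives $R_{\widehat{\nabla}^\mu}=R_{\widehat{\nabla}^{M_1}}$, so this sub-sum assembles directly into ${\rm Ric}^{\widehat{\nabla}^{M_1}}(\partial_{x^I},\partial_{x^K})$. When the summation index lies in $M_2$, Proposition \ref{P}(2) shows the curvature is a scalar multiple of $\partial_{y^L}$; extracting the $\partial_{y^L}$-coefficient and super-tracing over $M_2$ produces the factor $q-n$ multiplying the bracket $\tfrac{H^h_{M_1}(\partial_{x^I},\partial_{x^K})}{h}+\cdots$. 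The half-symmetrization built into Definition \ref{def7} is what promotes the single term $g_1(Y,\nabla^{L,M_1}_XP)$ into the averaged expression $\tfrac{1}{2}[(-1)^{|\partial_{x^I}||\partial_{x^K}|}g_1(\partial_{x^K},\nabla^{L,M_1}_{\partial_{x^I}}P)+g_1(\partial_{x^I},\nabla^{L,M_1}_{\partial_{x^K}}P)]$ in the statement.

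For part (2), with $X=\partial_{x^I}\in M_1$ and $Y=\partial_{y^J}\in M_2$, every contribution to the Ricci formula vanishes: in $R_{\widehat{\nabla}^\mu}(e_L,X)Y$ the $M_1$-indexed terms are killed by Proposition \ref{P}(3), while the $M_2$-indexed terms produce (via \eqref{a3} and Proposition \ref{P}(5)) a vector lying in the $M_1$-direction, whose $\partial_{y^L}$-coefficient is zero; symmetrically, $R_{\widehat{\nabla}^\mu}(e_L,Y)X$ vanishes by Proposition \ref{P}(4) together with \eqref{a3}.

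For part (3), set $X=\partial_{y^L}$, $Y=\partial_{y^J}$ and impose $|g|=|P|=0$. The $M_1$-index sum uses the simplified form of Proposition \ref{P}(5): the Hessian piece traces to $\triangle^L_{g_1}(h)/h$ (as in Proposition \ref{prop12}(3)), while the extra term $\tfrac{P(h)}{h}X$ contributes $(p-m)\tfrac{P(h)}{h}$ after super-tracing. The $M_2$-index sum uses Proposition \ref{P}(6): the $R^{L,M_2}$ piece assembles to ${\rm Ric}^{L,M_2}(\partial_{y^L},\partial_{y^J})$, and the bracket $(-1)^{|V||W|}g_\mu(U,W)V-(-1)^{|U|(|V|+|W|)}g_\mu(V,W)U$ super-traces to $-(q-n-1)g_\mu(\partial_{y^L},\partial_{y^J})$, producing both the $(q-n-1)({\rm grad}_{g_1}h)(h)/h^2$ contribution and the $(q-n-1)\tfrac{P(h)}{h}$ contribution; adding the $M_1$- and $M_2$-sums in front of $\tfrac{P(h)}{h}$ yields the stated coefficient $p-m+q-n-1$. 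The principal obstacle throughout is the careful bookkeeping of the $\mathbb{Z}_2$-signs arising simultaneously from Definition \ref{def7}, from Proposition \ref{P}, and from extracting the component along a homogeneous basis vector; imposing $|g|=|P|=0$ in (3) is precisely what collapses these signs into the clean displayed form, whereas parts (1) and (2) avoid the issue because either all arguments belong to a single factor or the mixed components vanish identically.
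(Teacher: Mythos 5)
Your proposal follows essentially the same route as the paper's proof: split the Ricci super-trace of Definition \ref{def7} into the $M_1$- and $M_2$-indexed sub-sums, evaluate each via the corresponding case of Proposition \ref{P}, and use the super-trace counts $p-m$ and $q-n$ together with the half-symmetrization to produce the averaged $\nabla^{L,M_1}P$ terms in (1) and the coefficient $q-n-1+p-m$ in (3). This matches the paper's $\Delta_1+\Delta_2$ computation, so the approach is correct and not materially different.
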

\begin{proof}
(1)By Definition \ref{def7}, we have
\begin{align}\label{f17}
{\rm Ric}^{\widehat{\nabla}^\mu}(\partial_{x^I},\partial_{x^K})&=\sum_L(-1)^{|\partial_{x^L}|(|\partial_{x^L}|+|\partial_{x^I}|+|\partial_{x^K}|)}\frac{1}{2}[R_{\widehat{\nabla}^\mu}(\partial_{x^L},\partial_{x^I})\partial_{x^K}+(-1)^{|\partial_{x^I}||\partial_{x^K}|}R_{\widehat{\nabla}^\mu}(\partial_{x^L},\partial_{x^K})\partial_{x^I}]^L\nonumber\\
&+\sum_J(-1)^{|\partial_{y^J}|(|\partial_{y^J}|+|\partial_{x^I}|+|\partial_{x^K}|)}\frac{1}{2}[R_{\widehat{\nabla}^\mu}(\partial_{y^J},\partial_{x^I})\partial_{x^K}+(-1)^{|\partial_{x^I}||\partial_{x^K}|}R_{\widehat{\nabla}^\mu}(\partial_{y^J},\partial_{x^K})\partial_{x^I}]^J\nonumber\\
&=\sum_L(-1)^{|\partial_{x^L}|(|\partial_{x^L}|+|\partial_{x^I}|+|\partial_{x^K}|)}\frac{1}{2}[R_{\widehat{\nabla}^{M_1}}(\partial_{x^L},\partial_{x^I})\partial_{x^K}+(-1)^{|\partial_{x^I}||\partial_{x^K}|}R_{\widehat{\nabla}^{M_1}}(\partial_{x^L},\partial_{x^K})\partial_{x^I}]^L\nonumber\\
&+\sum_J(-1)^{|\partial_{y^J}|(|\partial_{y^J}|+|\partial_{x^I}|+|\partial_{x^K}|)}\frac{1}{2}[R_{\widehat{\nabla}^\mu}(\partial_{y^J},\partial_{x^I})\partial_{x^K}+(-1)^{|\partial_{x^I}||\partial_{x^K}|}R_{\widehat{\nabla}^\mu}(\partial_{y^J},\partial_{x^K})\partial_{x^I}]^J\nonumber\\
&={\rm Ric}^{\widehat{\nabla}^{M_1}}(\partial_{x^I},\partial_{x^K})-\sum_J(-1)^{|\partial_{y^J}||\partial_{y^J}|}\frac{1}{2}\bigg[\frac{H_{M_1}^h(\partial_{x^I},\partial_{x^K})}{h}+(-1)^{|\partial_{x^I}||\partial_{x^K}|}\frac{H_{M_1}^h(\partial_{x^K},\partial_{x^I})}{h}\nonumber\\
&+(-1)^{|\partial_{x^I}||\partial_{x^K}|}g_1(\partial_{x^K},\widehat{\nabla}^{L,M_1}_{\partial_{x^L}}P)+g_1(\partial_{x^L},\widehat{\nabla}^{L,M_1}_{\partial_{x^K}}P)-2\pi(\partial_{x^L})\pi(\partial_{x^K})\bigg]\nonumber\\
&={\rm Ric}^{\widehat{\nabla}^{M_1}}(\partial_{x^I},\partial_{x^K})-(q-n)\bigg[
\frac{H^h_{M_1}(\partial_{x^I},\partial_{x^K})}{h}+\frac{(-1)^{|\partial_{x^I}||\partial_{x^K}|}g_1(\partial_{x^K},\nabla^{L,M_1}_{\partial_{x^I}}P)}{2}\nonumber\\
&+\frac{g_1(\partial_{x^I},\nabla^{L,M_1}_{\partial_{x^K}}P)}{2}-\pi(\partial_{x^I})\pi(\partial_{x^K})\bigg],\nonumber\\
\end{align}
so (1) holds.\\
(2)Similar to (2) in Propsition \ref{prop12}, we get
\begin{align}\label{f18}
{\rm Ric}^{\widehat{\nabla}^\mu}(\partial_{x^I},\partial_{y^J})&={\rm Ric}^{\widehat{\nabla}^\mu}(\partial_{y^J},\partial_{x^I})=0.\nonumber\\
\end{align}
(3)By Definition \ref{def7}, we have
\begin{align}\label{f19}
{\rm Ric}^{\widehat{\nabla}^\mu}(\partial_{y^L},\partial_{y^J})&=\sum_I(-1)^{|\partial_{x^I}|(|\partial_{x^I}|+|\partial_{y^L}|+|\partial_{y^J}|)}\frac{1}{2}[R_{\widehat{\nabla}^\mu}(\partial_{x^I},\partial_{y^L})\partial_{y^J}+(-1)^{|\partial_{y^L}||\partial_{y^J}|}R_{\widehat{\nabla}^\mu}(\partial_{x^I},\partial_{y^J})\partial_{y^L}]^I\nonumber\\
&+\sum_K(-1)^{|\partial_{y^K}|(|\partial_{y^K}|+|\partial_{y^L}|+|\partial_{y^J}|)}\frac{1}{2}[R_{\widehat{\nabla}^\mu}(\partial_{y^K},\partial_{y^L})\partial_{y^J}+(-1)^{|\partial_{y^L}||\partial_{y^J}|}R_{\widehat{\nabla}^\mu}(\partial_{y^K},\partial_{y^J})\partial_{y^L}]^K\nonumber\\
&=\Delta_1+\Delta_2,\nonumber\\
\end{align}
where
\begin{align}\label{1}
&\Delta_1:=\sum_I(-1)^{|\partial_{x^I}|(|\partial_{x^I}|+|\partial_{y^L}|+|\partial_{y^J}|)}\frac{1}{2}[R_{\widehat{\nabla}^\mu}(\partial_{x^I},\partial_{y^L})\partial_{y^J}+(-1)^{|\partial_{y^L}||\partial_{y^J}|}R_{\widehat{\nabla}^\mu}(\partial_{x^I},\partial_{y^J})\partial_{y^L}]^I,\nonumber\\
&\Delta_2:=\sum_K(-1)^{|\partial_{y^K}|(|\partial_{y^K}|+|\partial_{y^L}|+|\partial_{y^J}|)}\frac{1}{2}[R_{\widehat{\nabla}^\mu}(\partial_{y^K},\partial_{y^L})\partial_{y^J}+(-1)^{|\partial_{y^L}||\partial_{y^J}|}R_{\widehat{\nabla}^\mu}(\partial_{y^K},\partial_{y^J})\partial_{y^L}]^K,\nonumber\\
\end{align}
by Propsition \ref{b3}, we have
\begin{align}\label{f20}
R_{\widehat{\nabla}^\mu}(\partial_{x^I},\partial_{y^L})\partial_{y^J}=-(-1)^{|\partial_{x^I}|(|\partial_{y^L}|+|g|+|\partial_{y^J}|)}g_\mu(\partial_{y^L},\partial_{y^J})\bigg[\frac{\nabla^{L,M_1}_{\partial_{x^I}}(grad_{g_1}h)}{h}+(-1)^{(|\partial_{x^I}|+|P|)|g|}\frac{P(h)}{h}\partial_{x^I}\bigg],\nonumber\\
\end{align}
then, we get
\begin{align}\label{f21}
\Delta_1&=-g_\mu(\partial_{y^L},\partial_{y^J})[\frac{\triangle^L_{g_1}(h)}{h}+\sum_I(-1)^{|\partial_{x^I}||\partial_{x^I}|}(-1)^{|P||g|}\frac{P(h)}{h}],\nonumber\\
\end{align}
\begin{align}\label{f22}
\Delta_2&=\sum_K(-1)^{|\partial_{y^K}|(|\partial_{y^K}|+|\partial_{y^L}|+|\partial_{y^J}|)}\frac{1}{2}\bigg\{R^{L,M_2}(\partial_{y^K},\partial_{y^L})\partial_{y^J}+\frac{(grad_{g_1}h)(h)}{h^2}\bigg[(-1)^{|\partial_{y^L}||\partial_{y^J}|}g_\mu(\partial_{y^K},\partial_{y^J})\delta^K_L\nonumber\\
&-(-1)^{|\partial_{y^K}|(|\partial_{y^L}|+|\partial_{y^J}|)}g_\mu(\partial_{y^L},\partial_{y^J})\bigg]\bigg\},\nonumber\\
&=Ric^{L,M_2}(\partial_{y^L},\partial_{y^J})-g_\mu(\partial_{y^L},\partial_{y^J})\bigg[\frac{\triangle_g^L(h)}{h}+(p-m)P(h)+(q-n-1)\frac{grad_{g_1}h}{h}\bigg],\nonumber\\
\end{align}
\begin{align}\label{f23}
&{\rm when}~~|g|=|P|=0,~~{\rm then}\nonumber\\
&\Delta_1+\Delta_2={\rm Ric}^{L,M_2}(\partial_{y^L},\partial_{y^J})-g_\mu(\partial_{y^L},\partial_{y^J})
[\frac{\triangle^L_{g_1}(h)}{h}+(q-n-1)\frac{({\rm grad}_{g_1}h)(h)}{h^2}+(q-n-1+p-m)\frac{P(h)}{h}],\nonumber\\
\end{align}
so (3) holds.\\
\end{proof}
\section{Special super warped products with a semi-symmetric non-metric connection}
\label{Section:4}
\indent In this section, we construct an Einstein super warped product with a semi-symmetric non-metric connection. Let $(M_2^{(q,n)},g_2)$ be a super Riemannian manifold and $\mathbb{R}^{(1,0)}$ be the real line. We consider the super Riemannian manifold $M=\mathbb{R}^{(1,0)}\times _\mu M_2^{(q,n)}$ and $g_\mu=-dt\otimes dt+h^2g_2$, where
$h(t)$ and $\mu(t)=h(t)^2$ be non-zero functions for $t\in \mathbb{R}$ and $|g_2|=0$.\\
\indent Let $P=\partial_t$, then by Definition \ref{def6} and Definition \ref{def7}, we get $R_{\widehat{\nabla}^{\mathbb{R}}}(\partial_t,\partial_t)\partial_t=0$  and ${\rm Ric}^{\widehat{\nabla}^{\mathbb{R}}}(\partial_{t},\partial_{t})=0$. By computations, we have $H^h_{M_1}(\partial_{t},\partial_{t})=h''$, ${\rm grad}_{g_1}(h)=-h'\partial_t$ and $\triangle_{g_1}^L(h)=-h''$. By Propsition \ref{prop3}, we have
\begin{prop}\label{cprop1}
The following equalities holds
\begin{align}\label{c1}
(1){\rm Ric}^{\widehat{\nabla}^\mu}(\partial_{t},\partial_{t})&=-(q-n)(\frac{h''}{h}-1),\nonumber\\
(2){\rm Ric}^{\widehat{\nabla}^\mu}(\partial_{t},\partial_{y^J})&={\rm Ric}^{\widehat{\nabla}^\mu}(\partial_{y^J},\partial_{t})=0,\nonumber\\
(3){\rm Ric}^{\widehat{\nabla}^\mu}(\partial_{y^L},\partial_{y^J})&={\rm Ric}^{L,M_2}(\partial_{y^L},\partial_{y^J})-g_\mu(\partial_{y^L},\partial_{y^J})\cdot[-\frac{h''}{h}-(q-n-1)\frac{(h')^2}{h^2}+(q-n)\frac{h'}{h}].\nonumber\\
\end{align}
\end{prop}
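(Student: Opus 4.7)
The plan is to specialize Proposition~\ref{prop3} to the setting $M_1 = \mathbb{R}^{(1,0)}$ with $g_1 = -dt\otimes dt$, $M_2 = M_2^{(q,n)}$, and $P = \partial_t$. Since $|g_\mu| = |g_2| = 0$ and $|P| = 0$, the $|g|=|P|=0$ forms of parts (1) and (3) of Proposition~\ref{prop3} apply. All geometric ingredients are already recorded in the paragraph immediately preceding the statement: $H^h_{M_1}(\partial_t,\partial_t) = h''$, ${\rm grad}_{g_1}h = -h'\partial_t$, and $\triangle^L_{g_1}(h) = -h''$; in addition $p-m = 1-0 = 1$.

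For part (1), I first observe that on the one-dimensional base $\mathbb{R}^{(1,0)}$ one has $\nabla^{L,M_1}_{\partial_t}\partial_t = 0$, hence $\nabla^{L,M_1}_{\partial_t}P = 0$; and $R_{\widehat{\nabla}^{M_1}}(\partial_t,\partial_t)\partial_t = 0$ by the antisymmetry (\ref{a3}), so ${\rm Ric}^{\widehat{\nabla}^{M_1}}(\partial_t,\partial_t) = 0$. Next, $\pi(\partial_t) = g_1(\partial_t,\partial_t) = -1$, so $\pi(\partial_t)\pi(\partial_t) = 1$. Feeding these into part (1) of Proposition~\ref{prop3} collapses the bracket to $\frac{h''}{h} - 1$, yielding $-(q-n)(\frac{h''}{h}-1)$.

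Part (2) is the direct specialization of (2) of Proposition~\ref{prop3} and requires no further work. For part (3), I would apply the $|g|=|P|=0$ form of (3) of Proposition~\ref{prop3} and substitute: $\triangle^L_{g_1}(h)/h = -h''/h$; $({\rm grad}_{g_1}h)(h) = -h'\cdot \partial_t(h) = -(h')^2$, which gives $(q-n-1)\frac{({\rm grad}_{g_1}h)(h)}{h^2} = -(q-n-1)\frac{(h')^2}{h^2}$; $P(h) = h'$; and the dimensional combination $q-n-1+p-m = q-n$. Collecting these terms inside the bracket produces exactly $-\frac{h''}{h} - (q-n-1)\frac{(h')^2}{h^2} + (q-n)\frac{h'}{h}$, matching the stated formula.

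The only point demanding care is the Lorentzian sign $\pi(\partial_t) = -1$ coming from $g_1 = -dt\otimes dt$: this is what produces the additive constant $+1$ in part (1) (rather than a vanishing correction), and it is easy to overlook. All other substitutions are arithmetic, so no serious obstacle is anticipated; the proposition is essentially a plug-in application of Proposition~\ref{prop3}.
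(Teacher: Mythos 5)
Your proposal is correct and follows essentially the same route as the paper: both specialize Proposition~\ref{prop3} to $M_1=\mathbb{R}^{(1,0)}$, $g_1=-dt\otimes dt$, $P=\partial_t$, using ${\rm Ric}^{\widehat{\nabla}^{\mathbb{R}}}(\partial_t,\partial_t)=0$, $\nabla^{L,\mathbb{R}}_{\partial_t}\partial_t=0$, $H^h_{M_1}(\partial_t,\partial_t)=h''$, $\triangle^L_{g_1}(h)=-h''$, $({\rm grad}_{g_1}h)(h)=-(h')^2$, $P(h)=h'$, and $p-m=1$. Your explicit flagging of $\pi(\partial_t)=-1$ as the source of the $+1$ in part (1) is exactly the computation the paper performs implicitly via the $-\pi(\partial_t)\pi(\partial_t)$ term.
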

\begin{proof}
(1)By (1) in Propsition \ref{prop3}, we have
\begin{align}\label{e1}
{\rm Ric}^{\widehat{\nabla}^\mu}(\partial_t,\partial_t)&={\rm Ric}^{\widehat{\nabla}^R}(\partial_t,\partial_t)-(q-n)\bigg[
\frac{H^h_{M_1}(\partial_t,\partial_t)}{h}-\pi(\partial_t)\pi(\partial_t)\nonumber\\
&+\frac{(-1)^{|\partial_t||\partial_t|}g_1(\partial_t,\nabla^{L,R}_{\partial_t}\partial_t)}{2}
+\frac{g_1(\partial_t,\nabla^{L,R}_{\partial_t}\partial_t)}{2}\bigg]\nonumber\\
&=-(q-n)(\frac{h''}{h}-1).\nonumber\\
\end{align}
(2)By (2) in Propsition \ref{prop3}, we have
\begin{align}\label{e2}
{\rm Ric}^{\widehat{\nabla}^\mu}(\partial_{t},\partial_{y^J})&={\rm Ric}^{\widehat{\nabla}^\mu}(\partial_{y^J},\partial_{t})=0.\nonumber\\
\end{align}
(3)By (3) in Propsition \ref{prop3}, $({\rm grad}_{g_1}h)(h)=-h'\partial_t$ and $\triangle^L_{g_1}(h)=-h''$, we have
\begin{align}\label{e3}
{\rm Ric}^{\widehat{\nabla}^\mu}(\partial_{y^L},\partial_{y^J})&={\rm Ric}^{L,M_2}(\partial_{y^L},\partial_{y^J})-g_\mu(\partial_{y^L},\partial_{y^J})
[\frac{\triangle^L_{g_1}(h)}{h}+(q-n-1)\frac{({\rm grad}_{g_1}h)(h)}{h^2}\nonumber\\
&+(q-n-1+p-m)\frac{P(h)}{h}]\nonumber\\
&={\rm Ric}^{L,M_2}(\partial_{y^L},\partial_{y^J})-g_\mu(\partial_{y^L},\partial_{y^J})
[\frac{-h''}{h}-(q-n-1)\frac{h'^2}{h^2}+(q-n)\frac{h'}{h}].\nonumber\\
\end{align}
\end{proof}
\begin{defn}\label{ee1}
We call that $(M,g_\mu,\widehat{\nabla}^\mu)$ is Einstein if
${\rm Ric}^{\widehat{\nabla}^\mu}(\overline{X},\overline{Y})=\lambda g_\mu(\overline{X},\overline{Y}),$ for $\overline{X},\overline{Y}\in {\rm Vect}(M)$ and a constant $\lambda$.
 \end{defn}
As in the ordinary warped product case (see Theorem 15 in \cite{W1}), by (\ref{c1}) and Definition \ref{ee1}, we have
the following theorems
\begin{thm}\label{cthm1} Let $M=\mathbb{R}^{(1,0)}\times _\mu M_2^{(q,n)}$ and $g_\mu=-dt\otimes dt+h^2g_2$ and $P=\partial_t$. Then
    $(M,g_\mu,\widehat{\nabla}^\mu)$ is Einstein with the Einstein constant
    $\lambda$ if and only if the following conditions are satisfied\\
    \noindent (1) $( M_2^{(q,n)},\nabla^{L,M_2})$ is Einstein with the Einstein constant $c_0$.\\
\noindent (2)
\begin{equation}\label{c3}
(q-n)(\frac{h''}{h}-1)=\lambda.
\end{equation}
(3)
\begin{equation}\label{c4}
\lambda h^2-h''h-(q-n-1)(h')^2+(q-n)hh'=c_0.
\end{equation}
\end{thm}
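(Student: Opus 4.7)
The plan is to derive the three conditions by testing the Einstein equation $\mathrm{Ric}^{\widehat{\nabla}^\mu}(\overline{X},\overline{Y})=\lambda g_\mu(\overline{X},\overline{Y})$ on pairs of natural frame vectors, using Proposition \ref{cprop1} to compute the left-hand side in each case. Since the tangent sheaf of $M$ is generated (locally, as an $\mathcal{O}_M$-module) by $\partial_t$ together with $\partial_{y^J}$, it suffices to check the Einstein condition on the three types of pairs $(\partial_t,\partial_t)$, $(\partial_t,\partial_{y^J})$, and $(\partial_{y^L},\partial_{y^J})$, and by $\mathcal{O}_M$-linearity this will be equivalent to the full Einstein condition.

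First I would handle the timelike component: since $g_\mu(\partial_t,\partial_t)=-1$, the identity $\mathrm{Ric}^{\widehat{\nabla}^\mu}(\partial_t,\partial_t)=\lambda g_\mu(\partial_t,\partial_t)$ combined with part (1) of Proposition \ref{cprop1} immediately gives
\begin{equation*}
-(q-n)\left(\tfrac{h''}{h}-1\right)=-\lambda,
\end{equation*}
which is exactly condition (2). Next, the mixed pair $(\partial_t,\partial_{y^J})$ is automatically compatible, since both $\mathrm{Ric}^{\widehat{\nabla}^\mu}(\partial_t,\partial_{y^J})$ (by part (2) of Proposition \ref{cprop1}) and $g_\mu(\partial_t,\partial_{y^J})$ vanish; so this case imposes no new constraint.

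The substantive step is the fiber component. Using part (3) of Proposition \ref{cprop1} together with $g_\mu(\partial_{y^L},\partial_{y^J})=h^2 g_2(\partial_{y^L},\partial_{y^J})$, the Einstein condition $\mathrm{Ric}^{\widehat{\nabla}^\mu}(\partial_{y^L},\partial_{y^J})=\lambda g_\mu(\partial_{y^L},\partial_{y^J})$ rearranges to
\begin{equation*}
\mathrm{Ric}^{L,M_2}(\partial_{y^L},\partial_{y^J})=\Big[\lambda h^2-h''h-(q-n-1)(h')^2+(q-n)hh'\Big]\,g_2(\partial_{y^L},\partial_{y^J}).
\end{equation*}
The key observation is a separation-of-variables argument: the right-hand side is a function of $t$ only, multiplied by the $t$-independent tensor $g_2(\partial_{y^L},\partial_{y^J})$, while the left-hand side depends only on the $M_2$-variables. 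Hence both sides must equal a constant multiple of $g_2$, i.e.\ the bracketed function of $t$ must be a constant $c_0$ (giving condition (3)) and $(M_2,g_2)$ must be Einstein with constant $c_0$ (giving condition (1)). Conversely, assuming (1)--(3), the three components of the Einstein identity are directly verified from Proposition \ref{cprop1}, establishing the reverse implication.

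The main obstacle — really the only nontrivial point — is justifying the separation-of-variables step rigorously in the $\mathbb{Z}_2$-graded setting: one must check that the nondegeneracy of $g_2$ on the fiber $M_2^{(q,n)}$, together with the evenness of $h$ and $\mu$, permits cancellation and forces the $t$-dependent scalar coefficient to be constant. Everything else reduces to bookkeeping with the formulas already established in Proposition \ref{cprop1}.
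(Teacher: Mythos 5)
Your proposal is correct and follows essentially the same route as the paper: evaluate the Einstein condition on the frame pairs $(\partial_t,\partial_t)$, $(\partial_t,\partial_{y^J})$, $(\partial_{y^L},\partial_{y^J})$ via Proposition \ref{cprop1}, read off condition (2) from the timelike component, and use the separation-of-variables argument (the paper phrases it as applying $\partial_t$ to both sides of the fiber equation and using $g_2(\partial_{y^L},\partial_{y^J})\neq 0$) to force the $t$-dependent coefficient to be the constant $c_0$, yielding conditions (1) and (3). The only difference is organizational (you treat the timelike component first and make the mixed-component check and the converse explicit, which the paper leaves implicit).
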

\begin{proof}
(1)By (3) in Propsition \ref{cprop1}, we have
\begin{align}\label{e4}
{\rm Ric}^{\widehat{\nabla}^\mu}(\partial_{y^L},\partial_{y^J})&={\rm Ric}^{L,M_2}(\partial_{y^L},\partial_{y^J})-g_\mu(\partial_{y^L},\partial_{y^J})\cdot[-\frac{h''}{h}-(q-n-1)\frac{(h')^2}{h^2}+(q-n)\frac{h'}{h}],\nonumber\\
\end{align}
then
\begin{align}\label{e5}
{\rm Ric}^{L,M_2}(\partial_{y^L},\partial_{y^J})&={\rm Ric}^{\widehat{\nabla}^\mu}(\partial_{y^L},\partial_{y^J})+g_\mu(\partial_{y^L},\partial_{y^J})\cdot[-\frac{h''}{h}-(q-n-1)\frac{(h')^2}{h^2}+(q-n)\frac{h'}{h}]\nonumber\\
&=\lambda g_\mu(\partial_{y^L},\partial_{y^J})+h^2g_2(\partial_{y^L},\partial_{y^J})\cdot[-\frac{h''}{h}-(q-n-1)\frac{(h')^2}{h^2}+(q-n)\frac{h'}{h}]\nonumber\\
&=\lambda h^2 g_2(\partial_{y^L},\partial_{y^J})+h^2g_2(\partial_{y^L},\partial_{y^J})\cdot[-\frac{h''}{h}-(q-n-1)\frac{(h')^2}{h^2}+(q-n)\frac{h'}{h}]\nonumber\\
&=h^2\left(\lambda-\frac{h''}{h}-(q-n-1)\frac{(h')^2}{h^2}+(q-n)\frac{h'}{h}\right)g_2(\partial_{y^L},\partial_{y^J})\nonumber\\
&=L(t)g_2(\partial_{y^L},\partial_{y^J}),\nonumber\\
\end{align}
by two sides of the equation (\ref{e5}) act simultaneously on $\partial_t$ and $g_2(\partial_{y^L},\partial_{y^J})\neq 0$, we have $L(t)=c_0$, so ${\rm Ric}^{L,M_2}(\partial_{y^L},\partial_{y^J})=c_0g_2(\partial_{y^L},\partial_{y^J})$, therefore (1) holds.\\
(2)By (1) in Propsition \ref{cprop1} and Definition \ref{ee1}, we have
\begin{align}\label{e6}
{\rm Ric}^{\widehat{\nabla}^\mu}(\partial_{t},\partial_t)&=\lambda g_\mu(\partial_t,\partial_t)=-\lambda=-(q-n)(\frac{h''}{h}-1),\nonumber\\
\end{align}
then we get $\lambda=(q-n)(\frac{h''}{h}-1).$\\
(3)By (1), we get $\lambda h^2-h''h-(q-n-1)(h')^2+(q-n)hh'=c_0$.\\
\end{proof}
By Theorem \ref{cthm1}, similar to the ordinary warped product case (see Theorem 3.1 in \cite{W01}), we have
\begin{thm}\label{cthm2}
Let $M=\mathbb{R}^{(1,0)}\times _\mu M_2^{(q,n)}$ and $g_\mu=-dt\otimes dt+h^2g_2$ and $P=\partial_t,$ when $q-n=1$, then
    $(M,g_\mu,\nabla^\mu)$ is Einstein with the Einstein constant
    $-\lambda_0$ if and only if the following conditions are satisfied\\
    \noindent (1) $( M_2^{(q,n)},\nabla^{L,M_2})$ is Einstein with the Einstein constant $c_0=hh'-h^2$.\\
\noindent (2-1) $\lambda_0<1,~~f(t)=c_1e^{\sqrt{1-\lambda_0}t}+c_2e^{-\sqrt{1-\lambda_0}t},$\\
 \noindent (2-2)
$\lambda_0=1,~~f(t)=c_1+c_2t,$\\
\noindent (2-3) $\lambda_0>1,~~f(t)=c_1{\rm
cos}\left(\sqrt{\lambda_0-1}t\right)+ c_2{\rm sin}\left(\sqrt{\lambda_0-1}t\right),$\\
\end{thm}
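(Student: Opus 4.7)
The plan is to reduce Theorem \ref{cthm2} to a direct specialization of Theorem \ref{cthm1} combined with solving an elementary constant-coefficient linear ODE. Since the setup ($M = \mathbb{R}^{(1,0)} \times_\mu M_2^{(q,n)}$, metric $g_\mu$, and $P = \partial_t$) is identical to that of Theorem \ref{cthm1}, the equivalence proved there immediately applies once we substitute the Einstein constant $\lambda = -\lambda_0$ and the codimension parameter $q-n = 1$.

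First, I would rewrite condition (2) of Theorem \ref{cthm1} under these substitutions: $(q-n)(h''/h - 1) = \lambda$ collapses to
\begin{equation*}
h'' = (1-\lambda_0)\, h.
\end{equation*}
This is the characteristic second-order linear ODE whose general solution splits into the three canonical cases stated in the theorem, according to the sign of $1-\lambda_0$: two real exponentials when $1-\lambda_0 > 0$, an affine function when $1-\lambda_0 = 0$ (a degenerate root), and a sinusoidal combination when $1-\lambda_0 < 0$. This is the origin of (2-1), (2-2), (2-3); no calculus beyond solving the characteristic polynomial $r^2 = 1-\lambda_0$ is required.

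Next, I would reduce condition (3) of Theorem \ref{cthm1}, namely
\begin{equation*}
\lambda h^2 - h''h - (q-n-1)(h')^2 + (q-n)\, h h' = c_0,
\end{equation*}
under $q-n = 1$ and $\lambda = -\lambda_0$. The $(h')^2$ term drops out because $q-n-1 = 0$, leaving
\begin{equation*}
-\lambda_0 h^2 - h'' h + h h' = c_0.
\end{equation*}
Substituting $h'' = (1-\lambda_0) h$ from the previous step gives $-\lambda_0 h^2 - (1-\lambda_0) h^2 + h h' = c_0$, i.e. $h h' - h^2 = c_0$, which is precisely the Einstein constant appearing in condition (1). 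Combined with part (1) of Theorem \ref{cthm1} — that $(M_2, \nabla^{L,M_2})$ must be Einstein with this constant — this gives condition (1) of Theorem \ref{cthm2}.

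I do not anticipate a serious obstacle: the work is essentially a substitution-and-ODE exercise layered on top of Theorem \ref{cthm1}. The only mild subtlety is ensuring that the three case splittings for the ODE are presented cleanly and that one verifies both implications (Einstein $\Rightarrow$ the listed conditions, and conditions $\Rightarrow$ Einstein) simultaneously via the already-proved biconditional of Theorem \ref{cthm1}, rather than re-deriving the Ricci computations from scratch.
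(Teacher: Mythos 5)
Your proposal is correct and follows essentially the same route as the paper: specialize Theorem \ref{cthm1} with $\lambda=-\lambda_0$ and $q-n=1$, read off $h''=(1-\lambda_0)h$ and solve it via the characteristic equation $r^2=1-\lambda_0$ (the three sign cases), and substitute back into condition (3) to get $c_0=hh'-h^2$. The paper does exactly this (writing $c_0=-\lambda_N$ and $\lambda_N=h^2-hh'$), so no further comment is needed.
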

\begin{proof}
(1)Let $\lambda=-\lambda_0$ and $c_0=-\lambda_N$, then
\begin{align}\label{e7}
\lambda_N-hh''-(q-n-1)h'^2-\lambda_0h^2+(q-n)hh'=0,\nonumber\\
\end{align}
when $q-n=1$, then\nonumber\\
\begin{align}\label{e8}
\lambda_N-hh''-\lambda_0h^2+hh'=0.\nonumber\\
\end{align}
By $(q-n)(\frac{h''}{h}-1)=-\lambda_0,$ we have $\lambda_N=h^2-hh'$ and $h''=(1-\lambda_0)h$,
so
$( M_2^{(q,n)},\nabla^{L,M_2})$ is Einstein with the Einstein constant $c_0=hh'-h^2$.\\
(2)By $h''=(1-\lambda_0)h$, we have characteristic equation $\mu^2-(1-\lambda_0)=0,$ then $\Delta=4(1-\lambda_0)$, so (2) holds.
\end{proof}
\begin{prop}\label{cprop2}
Let $M=\mathbb{R}^{(1,0)}\times _\mu M_2^{(q,n)}$ and $g_\mu=-dt\otimes dt+h^2g_2$ and $P=\partial_t,$ when $q-n=0$, then
    $(M,g_\mu,\widehat{\nabla}^\mu)$ is Einstein with the Einstein constant
    $-\lambda_0$ if and only if the following conditions are satisfied\\
    \noindent (1) $(M_2^{(q,n)},\nabla^{L,M_2})$ is Einstein with the Einstein constant $c_0$.\\
\noindent (2)$\lambda_0=0,$\\
\noindent (3) $c_0+hh''-h'^2=0$\\
\end{prop}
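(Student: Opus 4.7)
The plan is to follow the same template as in Theorem \ref{cthm1}, specializing Proposition \ref{cprop1} to the case $q-n=0$. First I would substitute $q-n=0$ into the three formulas of Proposition \ref{cprop1}. Part (1) collapses to ${\rm Ric}^{\widehat{\nabla}^\mu}(\partial_t,\partial_t)=0$; part (2) is automatic; and part (3) reduces to
\begin{equation*}
{\rm Ric}^{\widehat{\nabla}^\mu}(\partial_{y^L},\partial_{y^J})={\rm Ric}^{L,M_2}(\partial_{y^L},\partial_{y^J})-g_\mu(\partial_{y^L},\partial_{y^J})\Big[-\frac{h''}{h}+\frac{(h')^2}{h^2}\Big],
\end{equation*}
since the term $(q-n)\frac{h'}{h}$ drops and $(q-n-1)=-1$ reverses the sign of $(h')^2/h^2$.

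Next I would impose the Einstein condition ${\rm Ric}^{\widehat{\nabla}^\mu}(\overline X,\overline Y)=-\lambda_0 g_\mu(\overline X,\overline Y)$ from Definition \ref{ee1}. Testing on $\overline X=\overline Y=\partial_t$ and using $g_\mu(\partial_t,\partial_t)=-1$ together with the vanishing of the left side from part (1), I get $-\lambda_0\cdot(-1)=0$, hence $\lambda_0=0$. This is exactly condition (2) of the proposition.

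Then, with $\lambda_0=0$ already forced, I would test the Einstein condition on fiber vectors $\partial_{y^L},\partial_{y^J}$. The left side vanishes, so the reduced part (3) gives
\begin{equation*}
{\rm Ric}^{L,M_2}(\partial_{y^L},\partial_{y^J})=h^2g_2(\partial_{y^L},\partial_{y^J})\Big[-\frac{h''}{h}+\frac{(h')^2}{h^2}\Big]=\big[(h')^2-hh''\big]\, g_2(\partial_{y^L},\partial_{y^J}).
\end{equation*}
Mimicking the argument in the proof of Theorem \ref{cthm1}, I apply both sides to $\partial_t$ (so that the $t$-dependent scalar coefficient can be separated from the nontrivially nonzero fiber metric component) to conclude that $(h')^2-hh''$ must be a constant, call it $c_0$. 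This simultaneously yields condition (1) --- that $(M_2^{(q,n)},\nabla^{L,M_2})$ is Einstein with constant $c_0$ --- and condition (3), namely $c_0+hh''-(h')^2=0$. The converse direction is a direct substitution back into Proposition \ref{cprop1}.

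There is no serious obstacle here: the case $q-n=0$ is a degenerate simplification of the computation already done in Theorems \ref{cthm1}--\ref{cthm2}, and the only subtle point is the separation-of-variables argument that turns a fiber-wise identity into the equality of a $t$-dependent expression with a constant, which is handled exactly as in the proof of Theorem \ref{cthm1}(1).
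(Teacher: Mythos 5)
Your proposal is correct and follows essentially the same route as the paper: the paper's (very terse) proof just substitutes $q-n=0$ into the equations of Theorem \ref{cthm1} (in the $\lambda_0,\lambda_N$ notation of Theorem \ref{cthm2}) to read off $\lambda_0=0$ and $\lambda_N-hh''+h'^2=0$, which is exactly what you obtain by specializing Proposition \ref{cprop1} and imposing the Einstein condition, including the same separation-of-variables step borrowed from the proof of Theorem \ref{cthm1}(1).
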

\begin{proof}
When $q-n=0,$ we have $\lambda_0=0$ and $\lambda_N-hh''+h'^2=0$, then we get Propsition \ref{cprop2}.\\
\end{proof}
\begin{thm}\label{cthm3}
Let $M=\mathbb{R}^{(1,0)}\times _\mu M_2^{(q,n)}$ and $g_\mu=-dt\otimes dt+h^2g_2$ and $P=\partial_t$, when $q-n\neq 0,1$, then
    $(M,g_\mu,\widehat{\nabla}^\mu)$ is Einstein with the Einstein constant
    $-\lambda_0$ if and only if $(M_2^{(q,n)},\nabla^{L,M_2})$ is Einstein with the Einstein constant $c_0$ and one of the following conditions
holds\\
(1)$\lambda_0=\lambda_N=0, h=c_1e^t;$\\
(2)$\lambda_0=q-n, h=c_1=\sqrt{\frac{\lambda_N}{q-n}}.$
\end{thm}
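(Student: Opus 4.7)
The plan is to work directly from Theorem \ref{cthm1}, substituting $\lambda=-\lambda_0$ and $c_0=-\lambda_N$, and then to show that the resulting system of ODE plus algebraic constraint admits solutions only in the two listed families when $k:=q-n\neq 0,1$.

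First I would rewrite condition (2) of Theorem \ref{cthm1} as the constant-coefficient linear ODE
\begin{equation*}
h''=\alpha h,\qquad \alpha:=1-\frac{\lambda_0}{k},
\end{equation*}
and condition (3) as the algebraic identity
\begin{equation*}
-\lambda_0 h^2-h''h-(k-1)(h')^2+khh'=-\lambda_N.
\end{equation*}
Using $h''=\alpha h$ to eliminate $h''$, the second identity becomes
\begin{equation*}
-\Bigl(1+\tfrac{(k-1)\lambda_0}{k}\Bigr)h^2-(k-1)(h')^2+khh'=-\lambda_N,\qquad(\ast)
\end{equation*}
which must hold for all $t$.

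Next I would split into three cases according to the sign of $\alpha$ and match coefficients in $(\ast)$. If $\alpha>0$ write $h=c_1e^{\beta t}+c_2e^{-\beta t}$ with $\beta=\sqrt{\alpha}$; the coefficients of $e^{2\beta t}$, $e^{-2\beta t}$ and $1$ in $(\ast)$ must separately match. A short bookkeeping (using $(k-1)\beta^2=(k-1)-\tfrac{(k-1)\lambda_0}{k}$) collapses the $e^{\pm 2\beta t}$ coefficients to $k(\beta-1)c_1^2$ and $-k(1+\beta)c_2^2$, which forces $c_2=0$ and either $c_1=0$ (excluded, as $h$ must be nonzero) or $\beta=1$, i.e.\ $\lambda_0=0$; the constant term of $(\ast)$ then gives $\lambda_N=0$. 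This yields family (1), $h=c_1e^t$. If $\alpha=0$, i.e.\ $\lambda_0=k$, then $h=c_1+c_2t$ and $(\ast)$ becomes a polynomial in $t$; the $t^2$-coefficient $-kc_2^2$ forces $c_2=0$ (here $k\neq 0$ is used), whence $h=c_1$ and $(\ast)$ reduces to $kc_1^2=\lambda_N$, giving family (2). If $\alpha<0$ write $h=A\cos(\beta t+\phi)$ with $\beta=\sqrt{-\alpha}$; the coefficient of $\cos(2\beta t+2\phi)$ in $(\ast)$ simplifies (again using $(k-1)\beta^2$) to $-\tfrac{A^2k}{2}$, which vanishes only if $A=0$, so no admissible $h$ exists.

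Finally, the converse is immediate: I would check that each of the two candidate pairs $(h,\lambda_0,\lambda_N)$ satisfies both conditions of Theorem \ref{cthm1} together with the Einstein hypothesis on $M_2$, so the theorem is an if-and-only-if statement. The main obstacle is the coefficient-matching bookkeeping in the exponential case, since one has to observe that the $e^{2\beta t}$ and $e^{-2\beta t}$ coefficients simplify asymmetrically to $k(\beta-1)$ and $-k(1+\beta)$ respectively — that asymmetry is what rules out all $\beta\neq 1$ and, together with the $\alpha=0$ subcase, pins the problem down to exactly the two listed families; the $\alpha<0$ subcase, although apparently more delicate, dies uniformly because $(1+\tfrac{(k-1)\lambda_0}{k})-(k-1)\beta^2=k\neq 0$.
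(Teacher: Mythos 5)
Your proposal is correct and follows essentially the same route as the paper: both derive $h''=\bigl(1-\tfrac{\lambda_0}{q-n}\bigr)h$ from condition (2) of Theorem \ref{cthm1}, split into three cases by the sign of that coefficient, substitute the general solution into condition (3), and match coefficients of the resulting exponentials, polynomials, or trigonometric functions to isolate the two families. Your bookkeeping (the collapse of the $e^{\pm 2\beta t}$ coefficients to $k(\beta-1)c_1^2$ and $-k(1+\beta)c_2^2$, and the amplitude--phase form in the oscillatory case) is in fact cleaner than the paper's normalized system in $l$, $d_0$, $\overline{d_0}$, but it is the same argument.
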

\begin{proof}
Let $\lambda=-\lambda_0$ and $c_0=-\lambda_N$, then by (\ref{c3}), we have $h''=(1-\frac{\lambda_0}{q-n})h$. By (\ref{c4}), we get
\begin{align}\label{www}
\lambda_N-(q-n-1)h'^2+(q-n)hh'-(1+\lambda_0-\frac{\lambda_0}{q-n})h^2=0,\nonumber\\
\end{align}
when $q-n\neq0,1,$ we get
\begin{align}\label{ww}
\frac{\lambda_N}{1-q+n}+h'^2+\frac{(q-n)}{1-q+n}hh'+(\frac{\lambda_0}{q-n}-\frac{1}{1-q+n})h^2=0.\nonumber\\
\end{align}
Let $q-n=l,$ $\frac{\lambda_0}{q-n}=\frac{\lambda_0}{l}=d_0,$ $\frac{\lambda_N}{1-q+n}=\frac{\lambda_N}{1-L}=\overline{d_0},$\\
 {\bf case(a)}when $d_0<1,$ let $a_0=\sqrt{1-d_0},$ $b_0=-\sqrt{1-d_0},$ then $a_0+b_0=0,$ $a_0b_0=d_0-1$ and $h=c_1e^{a_0t}+c_2e^{b_0t},$ by (\ref{ww}), we have
\begin{align}\label{ss}
&\overline{d_0}+c_1^2(a_0^2+a_0b_0+1-\frac{1}{1-l}+\frac{l}{1-l}a_0)e^{2a_0t}+c_2^2(b_0^2+a_0b_0+1-\frac{1}{1-l}+\frac{l}{1-l}b_0)e^{2b_0t}\nonumber\\
&+c_1c_2(4a_0b_0-\frac{l}{1-l})=0,\nonumber\\
\end{align}
then
\begin{align}\label{ddd}
&\overline{d_0}+c_1c_2(4a_0b_0-\frac{l}{1-l})=0,\nonumber\\
&c_1^2(a_0^2+a_0b_0+1-\frac{1}{1-l}+\frac{l}{1-l}a_0)=0,\nonumber\\
&c_2^2(b_0^2+a_0b_0+1-\frac{1}{1-l}+\frac{l}{1-l}b_0)=0.\nonumber\\
\end{align}
 {\bf case(a-1)}When $c_1=0,~~c_2\neq 0,$ we get $a_0=-1,~~b_0=1,$ then this is a contradiction.\\
 {\bf case(a-2)}When $c_1\neq0,~~c_2=0,$ we get $a_0=1,~~b_0=-1,~~\lambda_N=0,~~\lambda_0=0,~~h=c_1e^t.$\\
 {\bf case(a-3)}When $c_1\neq0,~~c_2\neq0,$ then there is no solution.\\

\noindent{\bf case(b)}When $d_0=1,$ then $h=c_1+c_2t,$ by (\ref{ww}), we have
\begin{align}\label{s1s}
&\overline{d_0}+c_1^2(d_0-\frac{1}{1-l})+c_2^2[1+(d_0-\frac{1}{1-l})t^2+\frac{l}{1-l}t]+c_1c_2[2(d_0-\frac{1}{1-l})t+\frac{l}{1-l}]=0.\nonumber\\
\end{align}
 {\bf case(b-1)}When $c_1=0,~~c_2\neq 0,$ then $\overline{d_0}+c_2^2[1+(d_0-\frac{1}{1-l})t^2+\frac{l}{1-l}t]=0,$ we get $c_2=0,$  this is a contradiction.\\
 {\bf case(b-2)}When $c_1\neq0,~~c_2=0,$ then $\overline{d_0}+c_1^2(d_0-\frac{1}{1-l})=0,$ we get $h=c_1=\frac{\lambda_N}{l}.$\\
 {\bf case(b-3)}When $c_1\neq0,~~c_2\neq0,$ then $c_2^2(d_0-\frac{1}{1-l})=0,~~c_2^2\frac{l}{1-l}+2c_1c_2(d_0-\frac{1}{1-l})=0,$ we get $c_2=0,$ so this is a contradiction.\\

 \noindent{\bf case(c)}When $d_0>1,$ let $h_0=\sqrt{d_0-1},$ then $h=c_1cosh_0t+c_2sinh_0t,$ by (\ref{ww}), we have
\begin{align}\label{s2s}
&\overline{d_0}+(sinh_0t)^2[c_1^2h_0^2+c_2(d_0-\frac{1}{1-l})-c_1c_2\frac{l}{1-l}h_0]+(cosh_0t)^2[c_2^2h_0^2+c_1(d_0-\frac{1}{1-l})+c_1c_2\frac{l}{1-l}h_0]\nonumber\\
&+cosh_0tsinh_0t[-2c_1c_2h_0^2+2c_1c_2(d_0-\frac{l}{1-l})-c_1^2h_0\frac{1}{1-l}+c_2^2h_0\frac{1}{1-l}]=0,\nonumber\\
\end{align}
then\\
\begin{align}\label{s3s}
&\overline{d_0}+c_1^2h_0^2+c_2(d_0-\frac{1}{1-l})-c_1c_2\frac{l}{1-l}h_0=0,\nonumber\\
&\overline{d_0}+c_2^2h_0^2+c_1(d_0-\frac{1}{1-l})+c_1c_2\frac{l}{1-l}h_0=0,\nonumber\\
&-2c_1c_2h_0^2+2c_1c_2(d_0-\frac{l}{1-l})-c_1^2h_0\frac{1}{1-l}+c_2^2h_0\frac{1}{1-l}=0.\nonumber\\
\end{align}
By (\ref{s3s}), we can get $c_1=c_2=0,$ so this is a contradiction.\\
\end{proof}
 \indent Nextly, we give another example. Let $M_1=\mathbb{R}^{(1,2)}$ with coordinates $(t,\xi,\eta)$ and $|t|=0,~~|\xi|=|\eta|=1$.
 We give a metric $g_1=-dt\otimes dt+d\xi\otimes d\eta-d\eta\otimes d\xi$ on $M_1$ i.e.
\begin{equation}\label{c5}
g_1(\partial_t,\partial_t)=-1,~~g_1(\partial_\xi,\partial_\eta)=-1,~~g_1(\partial_\eta,\partial_\xi)=1,~~g_1(\partial_{x^I},\partial_{x^K})=0,
\end{equation}
for the other pair $(\partial_{x^I},\partial_{x^K})$. Let $\widetilde{M}=\mathbb{R}^{(1,2)}\times_\mu M_2^{(q,n)}$ and $g_\mu=g_1+h(t)^2g_2$ and $P=\partial_t$. By Proposition 7 in \cite{BG}, we have the Christoffel symbols $\Gamma^L_{JI}=0$, then
\begin{equation}\label{c6}
\nabla^{L,g_1}_{\partial_{x^J}}\partial_{x^K}=0,~~ R^{L,g_1}(X,Y)Z=0,~~{\rm Ric}^{L,g_1}(X,Y)=0.
\end{equation}
We have
\begin{equation}\label{c7}
H^h_{M_1}(\partial_{t},\partial_{t})=h'',~~H^h_M(\partial_{x^J},\partial_{x^K})=0,~~{\rm for~~ the~~ other~~ pair~~} (\partial_{x^I},\partial_{x^K}).
\end{equation}
\begin{equation}\label{c8}
 {\rm grad}_{g_1}(h)=-h'\partial_t,~~ \triangle_{g_1}^L(h)=-h''.
\end{equation}
By Proposition \ref{prop12} and the Einstein condition, we have
\begin{thm}\label{cthm5}
Let $\widetilde{M}=\mathbb{R}^{(1,2)}\times _\mu M_2^{(q,n)}$ and $g_\mu=g_1+h^2g_2$ and $P=\partial_t$. Then
    $(\widetilde{M},g_\mu,\nabla^{L,\mu})$ is Einstein with the Einstein constant
    $\lambda$ if and only if one of the following conditions is satisfied\\
    \noindent (1) $\lambda=0$, $q=n$, $(M_2^{(q,n)},\nabla^{L,M_2})$ is Einstein with the Einstein constant $-c_0$ and $hh''-h'^2=c_0$.\\
\noindent (2) $\lambda=0$, $q-n-1=0$, $(M_2^{(q,n)},\nabla^{L,M_2})$ is Einstein with the Einstein constant $0$ and $h=c_1t+c_2$ where $c_1,c_2$ are constant.\\
\noindent (3) $\lambda=0$, $q-n-1\neq 0,-1$, $(M_2^{(q,n)},\nabla^{L,M_2})$ is Einstein with the Einstein constant $-c_0$ and
$h=\pm \sqrt{\frac{c_0}{q-n-1}}t+c_2$, $\frac{c_0}{q-n-1}\geq 0$.\\
\end{thm}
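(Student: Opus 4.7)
The plan is to specialize Proposition \ref{prop12} to $M_1=\mathbb{R}^{(1,2)}$ using the data (\ref{c6})--(\ref{c8}), then impose the Einstein condition ${\rm Ric}^{L,\mu}(\overline{X},\overline{Y})=\lambda g_\mu(\overline{X},\overline{Y})$ componentwise and read off the constraints on $h$ and on ${\rm Ric}^{L,M_2}$. Since ${\rm Ric}^{L,M_1}=0$ and the Hessian $H^h_{M_1}$ is supported only on the pair $(\partial_t,\partial_t)$ with value $h''$, Proposition \ref{prop12}(1) collapses to ${\rm Ric}^{L,\mu}(\partial_t,\partial_t)=-\tfrac{(q-n)h''}{h}$ and ${\rm Ric}^{L,\mu}(\partial_{x^I},\partial_{x^K})=0$ for every other pair on $M_1$; part (3) becomes ${\rm Ric}^{L,\mu}(\partial_{y^L},\partial_{y^J})={\rm Ric}^{L,M_2}(\partial_{y^L},\partial_{y^J})+h^2g_2(\partial_{y^L},\partial_{y^J})\bigl[\tfrac{h''}{h}+(q-n-1)\tfrac{(h')^2}{h^2}\bigr]$ after substituting $\triangle^L_{g_1}(h)=-h''$ and $({\rm grad}_{g_1}h)(h)=-(h')^2$.

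The key observation is that testing the Einstein condition on the odd pair $(\partial_\xi,\partial_\eta)$ immediately pins down $\lambda$: by (\ref{c5}) one has $g_\mu(\partial_\xi,\partial_\eta)=-1\neq 0$, while the above computation gives ${\rm Ric}^{L,\mu}(\partial_\xi,\partial_\eta)=0$, so the equation $\lambda\cdot(-1)=0$ forces $\lambda=0$. With $\lambda=0$ in hand, the $(\partial_t,\partial_t)$ equation becomes $(q-n)h''=0$, which bifurcates the analysis into the two mutually exclusive alternatives $q=n$ and $h''=0$ (when $q\neq n$).

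In the first alternative $q=n$, the $(\partial_{y^L},\partial_{y^J})$ equation reduces to ${\rm Ric}^{L,M_2}(\partial_{y^L},\partial_{y^J})=-g_2(\partial_{y^L},\partial_{y^J})\bigl[hh''-(h')^2\bigr]$, so $(M_2,\nabla^{L,M_2})$ is Einstein if and only if $hh''-(h')^2$ is a $t$-independent constant $c_0$, yielding case (1). In the second alternative $h''=0$ with $q\neq n$, integration gives $h=c_1 t+c_2$, and the transverse Einstein equation becomes ${\rm Ric}^{L,M_2}(\partial_{y^L},\partial_{y^J})=-(q-n-1)c_1^2\,g_2(\partial_{y^L},\partial_{y^J})$. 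If $q-n-1=0$ this forces $M_2$ Ricci-flat and gives case (2); if $q-n-1\neq 0$ then $M_2$ is Einstein with constant $-c_0=-(q-n-1)c_1^2$, which recovers case (3) after solving $c_1=\pm\sqrt{c_0/(q-n-1)}$ under the compatibility condition $c_0/(q-n-1)\geq 0$.

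The computational part is entirely routine; the only step requiring any thought is spotting that the degeneracy of $g_1$ on the purely odd pair $(\partial_\xi,\partial_\eta)$ together with the vanishing of ${\rm Ric}^{L,M_1}$ on that pair forces the Einstein constant to vanish, which trivializes the otherwise intricate ODE analysis of Theorems \ref{cthm1}--\ref{cthm3}. The converse direction amounts to reversing each substitution and noting that on all remaining pairs $(\partial_{x^I},\partial_{x^K})$, $(\partial_{x^I},\partial_{y^J})$ both sides of the Einstein equation vanish automatically by (\ref{c6})--(\ref{c7}) and Proposition \ref{prop12}(2).
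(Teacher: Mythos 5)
Your proposal is correct and follows essentially the same route as the paper: specialize Proposition \ref{prop12} using (\ref{c6})--(\ref{c8}), deduce $\lambda=0$ from the vanishing of ${\rm Ric}^{L,\mu}$ on base pairs where $g_1$ is nonzero, reduce the $(\partial_t,\partial_t)$ equation to $(q-n)h''=0$, and split into the cases $q=n$ versus $h''=0$. The only difference is presentational: you make explicit that it is the pair $(\partial_\xi,\partial_\eta)$ which forces $\lambda=0$, a step the paper leaves implicit in passing from (\ref{k2}) to ``$\lambda=0$ and $q=n$ or $h''=0$.''
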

\begin{proof}
By (1) in Propsition \ref{prop12}, we have
\begin{align}\label{k1}
{\rm Ric}^{L,\mu}(\partial_{x^I},\partial_{x^K})&={\rm Ric}^{L,M_1}(\partial_{x^I},\partial_{x^K})-\frac{(q-n)}{h}H^h_{M_1}(\partial_{x^I},\partial_{x^K}),\nonumber\\
\end{align}
then
\begin{align}\label{k2}
\lambda g_\mu(\partial_{x^I},\partial_{x^K})&=-\frac{(q-n)}{h}H^h_{M_1}(\partial_{x^I},\partial_{x^K}),\nonumber\\
\end{align}
so we get $\lambda=0$ and $q=n$ or $h''=0.$\\
By $\lambda=0$, then we have\\
\begin{align}\label{jjj}
{\rm Ric}^{L,\mu}(\partial_{x^I},\partial_{y^J})&={\rm Ric}^{L,\mu}(\partial_{y^J},\partial_{x^I})=0.\nonumber\\
\end{align}
By (3) in Propsition \ref{prop12} and (\ref{jjj}), we have
\begin{align}\label{k3}
{\rm Ric}^{L,\mu}(\partial_{y^I},\partial_{y^J})&=g_2(\partial_{y^I},\partial_{y^J})[-hh''-(q-n-1)h'^2].\nonumber\\
\end{align}
Then we get\\
{\bf(case-a)}when $\lambda=0, q=n,$ by $hh''+(q-n-1)h'^2=c_0,$ we get $(M_2^{(q,n)},\nabla^{L,M_2})$ is Einstein with the Einstein constant $-c_0$ and $hh''-h'^2=c_0$,\\
{\bf(case-b)}when $\lambda=0, h''=0,$ by $hh''+(q-n-1)h'^2=c_0,$ we have $(q-n-1)h'^2=c_0,$\\
{\bf(case-b-1)}when $q-n-1=0,$ $(M_2^{(q,n)},\nabla^{L,M_2})$ is Einstein with the Einstein constant $0$ and $h=c_1t+c_2$ where $c_1,c_2$ are constant,\\
{\bf(case-b-2)}when $q\neq n,$ $q-n-1\neq0,$ $(M_2^{(q,n)},\nabla^{L,M_2})$ is Einstein with the Einstein constant $-c_0$ and
$h=\pm \sqrt{\frac{c_0}{q-n-1}}t+c_2$, $\frac{c_0}{q-n-1}\geq 0$.\\
\end{proof}
\indent By (\ref{a16}) and (\ref{c6}), we can get
\begin{align}\label{c9}
&{R}^{\widehat{\nabla}^{\mathbb{R}^{(1,2)}}}(\partial_t,\partial_\xi)\partial_t=-\partial_\xi,~~~
{R}^{\widehat{\nabla}^{\mathbb{R}^{(1,2)}}}(\partial_t,\partial_\eta)\partial_t=-\partial_\eta,\nonumber\\
&{R}^{\widehat{\nabla}^{\mathbb{R}^{(1,2)}}}(\partial_\xi,\partial_t)\partial_t=\partial_\xi,~~~
{R}^{\widehat{\nabla}^{\mathbb{R}^{(1,2)}}}(\partial_\eta,\partial_t)\partial_t=\partial_\eta,\nonumber\\
&{R}^{\widehat{\nabla}^{\mathbb{R}^{(1,2)}}}(\partial_{x^J},\partial_{x^K})\partial_{x^L}=0,\nonumber\\
\end{align}
for other pairs $(\partial_{x^J},\partial_{x^K},\partial_{x^L})$.
By (\ref{a4}) and (\ref{c9}), we have
\begin{align}\label{c10}
&{\rm Ric}^{\widehat{\nabla}^{\mathbb{R}^{(1,2)}}}(\partial_t,\partial_t)=2,~~~
{\rm Ric}^{\widehat{\nabla}^{\mathbb{R}^{(1,2)}}}(\partial_{x^J},\partial_{x^L})=0,
\end{align}
for other pairs $(\partial_{x^J},\partial_{x^L})$.\\
\indent If $(\widetilde{M},g_\mu,\widehat{\nabla}^{\mu})$ is Einstein with the Einstein constant
    $\lambda$, by Propsition \ref{prop1} and (\ref{c10}), we have
 \begin{equation}\label{c11}
 \lambda=0,~~2-(q-n)(\frac{h''}{h}-1)=-\lambda.
\end{equation}
 Solving (\ref{c11}), we get
 \begin{equation}\label{c12}
 h=c_1e^{{\sqrt{1+\frac{2}{q-n}}}t}+c_2e^{-{\sqrt{1+\frac{2}{q-n}}}t}.
\end{equation}
 By (\ref{b19}) (3) and the Einstein condition, we get
$(M_2^{(q,n)},\nabla^{L,M_2})$ is Einstein with the Einstein constant $c_0$ and
\begin{equation}\label{c13}
\lambda h^2-h''h-(q-n-1)(h')^2+(q-n-2)hh'=c_0.\\
\end{equation}
Then we have the following theorem
\begin{thm}\label{cthm6}
Let $\widetilde{M}=\mathbb{R}^{(1,2)}\times _\mu M_2^{(q,n)}$ and $g_\mu=g_1+h^2g_2$ and $P=\partial_t$. Then
    $(\widetilde{M},g_\mu,\widehat{\nabla}^{\mu})$ is Einstein with the Einstein constant
    $\lambda$ if and only if $( M_2^{(q,n)},\nabla^{L,M_2})$ is Einstein with the Einstein constant $c_0=0$ and $\lambda=0$, $h=c^*$, $q-n+2=0$.
\end{thm}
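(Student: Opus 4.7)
The plan is to work from the two necessary conditions already established in the paragraph preceding the theorem, namely $\lambda=0$ together with $h''=\frac{q-n+2}{q-n}h$, and, after confirming that $(M_2,\nabla^{L,M_2})$ is Einstein with some constant $c_0$, the relation
\[
\lambda h^2-h''h-(q-n-1)(h')^2+(q-n-2)hh'=c_0.
\]
Setting $\alpha:=\tfrac{q-n+2}{q-n}$ and using $\lambda=0$, this collapses to
\[
-\alpha h^2-(q-n-1)(h')^2+(q-n-2)hh'=c_0,
\]
and the entire argument reduces to a case split on the sign of $\alpha$, subject to the constraint that $h$ is strictly positive on all of $\mathbb{R}$.

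In the case $\alpha>0$, the general solution is $h=c_1e^{kt}+c_2e^{-kt}$ with $k=\sqrt{\alpha}$, and substitution turns the left-hand side into a linear combination of $e^{2kt}$, $e^{-2kt}$, and the constant $1$. Demanding the two exponential coefficients vanish yields
\[
c_1\bigl[(q-n-2)k-(q-n+2)\bigr]=0,\qquad c_2\bigl[(q-n-2)k+(q-n+2)\bigr]=0,
\]
with the constant term giving $2c_1c_2\alpha(q-n-2)=c_0$. Squaring either nontrivial bracket and applying $k^2=\alpha=(q-n+2)/(q-n)$ reduces to $(q-n-2)^2=(q-n)(q-n+2)$, equivalently $q-n=2/3$, which is impossible since $q$ and $n$ are non-negative integers. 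The only way both brackets can vanish simultaneously is $q-n+2=0$, but that contradicts $\alpha>0$. Hence $c_1=c_2=0$ and $h\equiv 0$, contradicting strict positivity.

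The case $\alpha<0$ forces $q-n=-1$, in which case the solutions are of the form $c_1\cos(\omega t)+c_2\sin(\omega t)$; any nonzero combination changes sign, incompatible with $h>0$. The only surviving possibility is $\alpha=0$, equivalently $q-n+2=0$; then $h''=0$ gives $h=c_1t+c_2$, and positivity on all of $\mathbb{R}$ forces $c_1=0$, so $h=c^*$ is a positive constant. Plugging $h'=h''=0$ back into the second equation yields $c_0=0$. The converse is a direct verification: with $q-n=-2$, $h=c^*$, $c_0=0$ and $(M_2,\nabla^{L,M_2})$ Ricci-flat, all of (\ref{c11}) and (\ref{c13}) hold automatically, so $(\widetilde{M},g_\mu,\widehat{\nabla}^\mu)$ is Einstein with constant $\lambda=0$.

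The main obstacle is executing the $\alpha>0$ case cleanly: one must separate the three linearly independent monomials in $e^{\pm kt}$, handle the symmetric sign branches, and then invoke integrality of $q-n$ to eliminate the spurious fractional solution $q-n=2/3$. The $\alpha<0$ and $\alpha=0$ cases are settled by quick positivity arguments.
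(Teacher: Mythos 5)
Your proof is correct and follows the same skeleton as the paper's: both start from the necessary conditions (\ref{c11})--(\ref{c13}), substitute the general solution of $h''=\alpha h$ into the fiber equation, and compare coefficients of linearly independent functions of $t$. The difference is in the execution, and yours is the more complete of the two. The paper only treats the real-exponential ansatz $h=c_1e^{kt}+c_2e^{-kt}$ and a degenerate constant solution at $k=0$; it never addresses the possibility $\alpha<0$ (where the solutions are trigonometric), and at $\alpha=0$ it silently replaces the general linear solution $c_1t+c_2$ by the constant $c_1+c_2$. You close both gaps with the non-vanishing of $h$ on all of $\mathbb{R}$, which is a genuine improvement. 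In the $\alpha>0$ branch the paper assembles the linear system (\ref{eeeeee}) in the coefficients $b_1,\dots,b_5$ of its expansion (\ref{hhh}) --- an expansion that moreover contains $e^{\pm kt}$ terms which cannot arise from $h^2$, $(h')^2$ or $hh'$, and which also conflates $k$ with $\sqrt{1+2/(q-n)}$ --- whereas you extract the two coefficient equations directly and eliminate them via the integrality obstruction $q-n=2/3$; this is cleaner and sidesteps the paper's computational slips while reaching the same conclusion that only the degenerate case $q-n+2=0$ survives. Two small points to tighten: the vanishing conditions should carry $c_1^2$ and $c_2^2$ rather than $c_1$ and $c_2$ (immaterial for the conclusion), and in the converse you should note explicitly that the mixed components ${\rm Ric}^{\widehat{\nabla}^\mu}(\partial_{x^I},\partial_{y^J})$ and the remaining base components such as $(\partial_\xi,\partial_\eta)$ also satisfy the Einstein equation (both sides vanish when $\lambda=0$, $h$ is constant and $P=\partial_t$), since (\ref{c11}) and (\ref{c13}) by themselves only encode the $(\partial_t,\partial_t)$ and fiber--fiber components.
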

\begin{proof}
Let $k=1+\frac{2}{q-n}$, by $\lambda=0,$ (\ref{c12}) and (\ref{c13}), we have
\begin{align}\label{hhh}
&[-(q-n-1)k^2c_1^2+(q-n-2)kc_1^2]e^{2kt}+[-(q-n-1)k^2c_2^2+(q-n-2)kc_2^2]e^{-2kt}-c_1k^2e^{kt}-c_2k^2e^{-kt}\nonumber\\
&=c_0-2c_1c_2(q-n-1)k^2.\nonumber\\
\end{align}
Let $b_1=-(q-n-1)k^2c_1^2+(q-n-2)kc_1^2,$ $b_2=-(q-n-1)k^2c_2^2+(q-n-2)kc_2^2,$ $b_3=-c_1k^2,$ $b_4=-c_2k^2,$ $b_5=c_0-2c_1c_2(q-n-1)k^2,$ we get
\begin{align}\label{h1h}
&b_1e^{2kt}+b_2e^{-2kt}+b_3e^{kt}+b_4e^{-kt}=b_5.\nonumber\\
\end{align}
When $k\neq 0,$ we have
\begin{eqnarray}\label{eeeeee}
       \begin{cases}
       b_1+b_2+b_3+b_4=b_5 \\[2pt]
      2kb_1-2kb_2+kb_3-kb_4=0\\[2pt]
       4k^2b_1+4k^2b_2+k^2b_3+k^2b_4=0\\[2pt]
       8k^3b_1-8k^3b_2+k^3b_3-k^3b_4=0\\[2pt]
       16k^4b_1+416k^4b_2+k^4b_3+k^4b_4=0,\\[2pt]
       \end{cases}
\end{eqnarray}
by (\ref{eeeeee}), we get $b_1=b_2=b_3=b_4=b_5=0,$ then $c_1=c_2=0,$ so this is a contradiction.\\
When $k=0,$ we get $q-n+2=0,$ $c_0=0$ and $h=c_1+c_2=c^*.$\\
\end{proof}

\section*{Acknowledgements}
The second author was supported in part by  NSFC No.11771070. The authors thak the referee for his (or her) careful reading and helpful comments.

\section*{References}

\clearpage
 \section*{Statement of `` Super warped products with a semi-symmetric non-metric connection"}
 a. Conflict of Interest Statement:
The authors declare no conflict of interest.\\

b. Data Availability Statement:
The authors confirm that the data supporting the findings of this study are available within the article.\\

c. Funding Information:
This research was funded by National Natural Science Foundation of China:
No.11771070.\\

d. Author Contribution Statement:
All authors contributed to the study conception and design. Material
preparation, data collection and analysis were performed by Tong Wu and Yong Wang. The first draft of
the manuscript was written by Tong Wu and all authors commented on previous versions of the manuscript.
All authors read and approved the final manuscript.\\

\end{document}